\numberwithin{equation}{section}
\theoremstyle{plain}
\newtheorem{theorem}{Theorem}[section]
\newtheorem{lemma}[theorem]{Lemma}
\newtheorem{proposition}[theorem]{Proposition}
\theoremstyle{definition}
\theoremstyle{remark}
\newtheorem{remark}{Remark}[section]
\def\be{\begin{equation}}
\def\ee{\end{equation}}
\def\l{\langle}
\def\r{\rangle}
\def\ik{{i_k}}
\def\jk{{j_k}}
\def\p{\parallel}
\def\xls{x_{LS}}
\def\bea{\begin{eqnarray}}
\def\eea{\end{eqnarray}}
\def\hb{\hat{b}}
\newcommand {\calN}        {{\mathcal N}}
\newcommand {\calR}        {{\mathcal R}}
\newcommand{\bitem}{\begin{itemize}}
\newcommand{\eitem}{\end{itemize}}
\newcommand{\mc}[1]{\mathcal{#1}}
\newcommand {\Rm}        {{\mathbb R^m}}
\newcommand{\N}{\mathbb{N}}
\newcommand{\R}{\mathbb{R}}
\newcommand{\EE}{\mathbb{E}}
\newcommand{\bpm}{\begin{pmatrix}}
\newcommand{\epm}{\end{pmatrix}}
\newcommand{\bsm}{\left(\begin{smallmatrix}}
\newcommand{\esm}{\end{smallmatrix}\right)}
\newcommand{\T}{\top}
\newcommand{\ol}[1]{\overline{#1}}
\newcommand{\la}{\langle}
\newcommand{\ra}{\rangle}
\newcommand{\mrm}[1]{\mathrm{#1}}
\newcommand{\veps}{\varepsilon}
\newcommand{\gdw}{\Leftrightarrow}
\DeclareMathOperator{\rank}{rank}
\DeclareMathOperator{\Diag}{Diag}
\DeclareMathOperator{\argmax}{arg max}
\title[Single projection Kaczmarz Extended algorithms]{Single projection Kaczmarz Extended algorithms}
\author[Petra, Popa]{Stefania Petra, ~Constantin Popa}
\address[S.~Petra]{Image and Pattern Analysis Group, University of Heidelberg, Speyerer Str.~6, 69115 Heidelberg, Germany} 
\email{\{petra\}@math.uni-heidelberg.de}
\urladdr{iwr.ipa.uni-heidelberg.de}
\address[C.~Popa]{Faculty of Mathematics and Informatics, {\it OVIDIUS} University of Constanta, Blvd.~Mamaia~124,
900527 Constan\c{t}a, Romania}
\email{cpopa@univ-ovidius.ro}
\urladdr{http://math.univ-ovidius.ro/doc/CVpdf/cv\_C\_Poparo.pdf}
\date{\today} 
\keywords{Inconsistent linear systems, least-squares problems, Kaczmarz extended algorithm, row-action methods}
\begin{document}

\begin{abstract}
To find the least squares solution of a very large and inconsistent system of equations,
one can employ the extended Kaczmarz algorithm. This method
 simultaneously removes the error term, such that a consistent system is asymptotically obtained, and applies Kaczmarz iterations for the current approximation of this system. For random corrections of the right hand side and  Kaczmarz updates selected at random,  convergence to the least squares solution has been shown. We consider the deterministic
control strategies, and show convergence to a least squares solution when row and column updates are chosen according to the almost-cyclic or maximal-residual choice.
\end{abstract}

\maketitle
\tableofcontents

\section{Introduction}

The \emph{Kaczmarz algorithm} \cite{Kaczmarz1937} for solving linear systems of the form
\be
\label{eq:Ax=b}
A x = b,\qquad A \in \R^{m \times n},\; b \in \R^{m}
\ee
in the least-squares sense is a protoypical instance of so-called iterative \emph{row-action methods} \cite{Censor1981} that can be applied to very large systems of equations. Typical applications include image reconstruction from tomographic projections \cite{Gordon1970} -- see \cite{Censor1997} for an overview and further examples. The Kaczmarz algorithm has recently gained some renewed interest through the work \cite{Strohmer2009} where an \emph{expected} exponential convergence rate was shown for a \emph{randomized} control scheme, used to define the sequence of Kaczmarz iterations.

In view of practical applications where measurements define the vector $b$ in \eqref{eq:Ax=b}, the \emph{inconsistent case}
\begin{equation} \label{eq:b!=R(A)}
b \notin \mc{R}(A)
\end{equation}
is significant due to measurement errors and noise that most likely take $b$ outside the range $\mc{R}(A)$ of $A$. Needell \cite{Needell2010} extended to this case the analysis of \cite{Strohmer2009} and showed a similar rate of convergence to a ball around the solution to the consistent system whose radius depends on the condition number of $A$ and the perturbation of $b$.
Throughout this paper we consider the inconsistent
system 
\be
\label{eq:Ax=hatb}
Ax = \hat{b} 
\ee
after an error vector $r$ is added to the ``clean'' right side $\hat{b} $.

Popa \cite{Popa1995} introduced the \emph{extended Kaczmarz iteration} so as to achieve convergence to a least-squares solution in the inconsistent case \eqref{eq:b!=R(A)}. The basic idea is to interleave ``row-actions'' on $x$ with ``columns-actions'' on $\hat{b} $. The latter iteratively remove the spurious component of $\hat{b} $ orthogonal to $\mc{R}(A)$
\begin{equation}
r := P_{\mc{R}(A)^{\perp}}(\hat{b} ).
\end{equation}
In a very recent paper \cite{Zouzias2013} a theoretical bound of the expected convergence rate was established for a \emph{randomized} version of the \emph{extended Kaczmarz} iteration.

This line of research focusing on the convergence \emph{rate} of randomized (extended) Kaczmarz iterations also connects to earlier work on establishing \emph{convergence} of the \emph{deterministic} Kaczmarz iteration when applied to \emph{inconsistent} linear systems. The issue of \emph{cyclic convergence} in this connection was recognized early \cite{Gubin1967,Tanabe1971,Censor1983} but not resolved, as discussed next.

\vspace{0.25cm}
\noindent
\textbf{Contribution.}
The present paper has the following objective: we establish convergence of the \emph{extended} Kaczmarz iteration for a particular control scheme -- henceforth called \emph{maximal-residual control scheme} -- used to define the sequence of iterates: at each iterative step the largest residual with respect to $x$ and $\hat{b} $ determines the row- and column action to be performed as subsequent iterative step. It is evident that this scheme most aggressively aims to achieve convergence based on additional computational costs encountered when determining the maximal residuals. Convergence however was neither established in \cite{Popa1995} nor somewhere else in the literature, to our knowledge. This also holds for the application of the \emph{almost cyclic control scheme} \cite{Censor1997} to \emph{inconsistent} linear systems. Our present work also fills this gap in the literature.

\vspace{0.25cm}
\noindent
\textbf{Organization.} We recall the classical Kaczmarz algorithm in Section \ref{sec:K-alg}. We specify in Section \ref{sec:KE-algorithms} different iterative schemes based on the Kaczmarz algorithm and its deterministic and randomized extensions discussed above. This section also includes preparatory Lemmata for the convergence analysis of the maximal-residual control scheme, and the almost cyclic control scheme, established in Section \ref{sec:convergence}. 
 We conclude and indicate further directions of research in Section \ref{sec:conclusion}.




\subsection*{Notation} \label{sec:Notation}
We set $[n] = \{1,\dotsc,n\}$ for $n \in \N$. $\la \cdot,\cdot \ra$ denotes the Euclidean inner product and $\|\cdot\| = \|\cdot\|_{2} = \la \cdot,\cdot \ra^{1/2}$ the corresponding norm. 
For an $m \times n$ real matrix $A$, $A^\top$ will be its transpose and $\calR(A)$, $\calN(A)$ its  range and null space, respectively. $S^{\perp}$ will denote the orthogonal complement of some vector subspace $S \subset \R^q$, and $P_{C}$ the orthogonal projector onto some closed convex set $C$.
For given $\hat b \in \R^{m}$ and $A \in \R^{m \times n}$, we define the orthogonal decomposition
\begin{equation} \label{eq:b-dec}
\hat b = b + r,\qquad 
b \in \mc{R}(A),\qquad 
r \in \mc{R}(A)^{\perp} = \mc{N}(A^{\T}).
\end{equation}
The set of least-squares solution to problem \eqref{eq:Ax=b} is denoted by
\begin{equation} \label{eq:def-Sset}
LSS(A;\hat b) = \big\{ x \in \R^{n} \colon x = x_{LS} + \mc{N}(A),\;
A x_{LS} = P_{\mc{R}(A)}(\hat b) = b \big\}
\end{equation}
The probability simplex in $\R^{n}$ is
\begin{equation}
\Delta_{n} = \Big\{ x \in \R^{n} \colon x \geq 0,\, \sum_{i \in [n]} x_{i} = 1 \Big\}.
\end{equation}

$\|A\|_2$ denotes the spectral norm of a linear mapping $A: \R^n \to \R^m$ defined by \[\| A \|_2 = \sup_{x \neq 0} \frac{\| Ax \|_2}{\| x \|},\] and $\|A\|_{F} = (\sum_{i \in [m], j \in [n]} A_{ij}^{2})^{1/2}$ the Frobenius norm. The Moore-Penrose pseudoinverse is denoted by $A^+$.
Vectors are enumerated with superscripts $x^{i}$ and vector and matrix components with subscripts $x_{i}, A_{ij}$. Specifically, \emph{matrix rows and columns} are denoted by 
\begin{equation} \label{eq:notation-row-column}
A_{i} \quad\text{(row $i$)} 
\qquad\text{and}\qquad 
A^{j} \quad\text{(column $j$)}
\end{equation}
respectively. 
$\EE[\cdot]$ denotes the expectation operation applied to a random variable.
$\ell^1$ denotes the space of all absolutely summable sequences $(x_k)_{k\in\N}$ satisfying
$\sum_{k\in\N}|x_k|<\infty$, while $\ell_+$ will denote nonnegative sequences. 
The space of convergent sequences is denoted by $\ell_{c}$, and $\ell_{c_{0}}$ denotes the space of sequences converging to zero.


\section{The Kaczmarz Algorithm}\label{sec:K-alg}

The Kaczmarz Algorithm was first published \cite{Kaczmarz1937}. 
In it's simplest form the Kaczmarz iteration proceeds as follows:
\begin{algorithm}
\caption{Kaczmarz (K)}
\label{alg:Kacz}
  \begin{algorithmic}
    \Require{$A\in\R^{m\times n},\hat{b}\in\Rm, k_{max}\in \N$}
    \Return{Approximation to $\xls$ at bounded distance to $\xls$ (proportional to noise and condition number)}
    \Statex {\bf Initialization} $x^0 \in \R^n, k_{{\rm \max}}$ 
    \For  {$k= 1, \dots , k_{{\rm \max}}$} 
       \For  {$\ik= 1, \dots , m$} 
        \State Set
        \be
        \label{K-1}
        x^{k}= x^{k-1} -  \frac{\l x^{k-1}, A_{i_k} \r - \hat b_{i_k}}{\p A_{i_k} \p^2} A_{i_k}.
        \ee
        \EndFor 
    \EndFor  
  \end{algorithmic}
\end{algorithm}

In the field of image reconstruction it is known as ART (\emph{Algebraic Reconstruction Technique})
and independently rediscovered in \cite{Gordon1970}.
The algorithm is a particular 
\emph{Projection Onto Convex Sets (POCS)} algorithm \cite{ConvexFeasibility-96},
and can also be viewed
as a special instance of \emph{Bregman's balancing method} \cite{Bregman1965}, 
which for each $i:=(k \mod m)+1$ finds
$$x^{k+1}=x^k+\omega_k(P_{\hat H_i}(x^{k})-x^k)\ ,$$
where $P_{\hat H_i}(x^{k})$ is the orthogonal projection of $x^k$ on the
$i$-th hyperplane $\hat H_{i} = \{ x \in \R^n, \l A_{i}, x \r = \hat{b}_{i} \}$.

This sequential POCS method converges in the consistent case to a
point in the intersection of the convex sets, see
\cite[Th. 1]{Gubin1967}. However, in the inconsistent case it does not
converge, but convergence of the cyclic subsequences, called \emph{cyclic
convergence}, can be shown \cite[Th. 2]{Gubin1967}.

For the Kaczmarz algorithm (without relaxation), Kaczmarz \cite{Kaczmarz1937}
proved convergence to the unique solution of the system, provided
$A$ is square and invertible. Herman et al. showed in \cite{Herman1978}
that ART with relaxation converges in the consistent case. The case
in which no  (see also \cite{Popa2004}) solution exists has been considered by Tanabe \cite{Tanabe1971}, 
who proved convergence to a limit cycle of vectors. If, the
relaxation parameter $\omega_k$ goes to zero, the element of the
limit cycle approach the same vector. This has been considered by
Censor et al. \cite{Censor1983}, who show that the limiting single
vector is the least squares solution that is unique provided $A$
has full rank.

However, in both consistent and inconsistent case
no convergence rates existed in terms of matrix characteristics like e.g. the matrix condition number.
By considering a random row selection strategy 
a first important step was made in \cite{Strohmer2009} for the consistent, full rank case, and
expected convergence rates where obtained in term of linear algebraic characteristics of $A$.
The Randomized Kaczmarz algorithm \cite{Strohmer2009}
triggered a series a number of recent publications \cite{Needell2010,Eldar2011,Needell2014}.
The convergence of the the Randomized Kaczmarz algorithm 
was analyzed in \cite{Needell2010}. The \emph{expected} convergence to a ball of fixed radius centered at the
least squares solution was shown, \cite[Thm. 2.1 ]{Needell2010}. This radius is proportional to the norm of the additive noise scaled by the condition number,
and equals at most
\be\label{eq:max_radius}
\hat{k}(A) 
\max_{i\in[m]} \frac{|r_i|}{\| A_i \|},
\ee
where $\hat{k}(A) = \|A^+\|_2 \|A \|_F$.

The bound \eqref{eq:max_radius} shows that the randomized Kaczmarz
method performs well when the noise in inconsistent systems is small. 
The Kaczmarz method will not converge to the least squares solution of an inconsistent system, since its iterates
always lie in a single solution space  given by a single row of the
matrix $A$.

In order to overcome this problem and converge to a least squares solution 
we consider an approach first introduced by the second author in
\cite{cp98}, which  a employs
a iteratively modified right-hand side vector to  deal with the
inconsistent case. 
We show next that this strategy breaks the radius barrier of the standard method also for deterministic
row and column selection strategies, as shown before in \cite{Zouzias2013} for the random choice.

\section{Single Projection Kaczmarz Extended (KE) algorithms}\label{sec:KE-algorithms}

Algorithm \ref{alg:EK} extends Algorithm \ref{alg:Kacz} to inconsistent systems \eqref{eq:Ax=b} due to perturbations $\hat b = b + r$ of the right-hand side.

\begin{algorithm}
\caption{Single Projection Extended Kaczmarz (EK)}
\label{alg:EK}
  \begin{algorithmic}
    \Require{$A\in\R^{m\times n},\hat{b}\in\Rm, k_{max}\in \N$}
    \Return{Approximative least-squares solution}
    \Statex {\bf Initialization} $x^0 \in \R^n, y^0 = \hat{b}$, $\alpha, \omega \in (0,2);$ 
    \For  {$k= 1, \dots , k_{max}$} 
       \State  Select the index $j_k \in [n]$ and set
       \be
       \label{EK-1}
       y^{k} = y^{k-1} - \alpha {\l y^{k-1}, A^{j_k} \r} A^{j_k}.
       \ee 
    \State Update the right hand side as 
       \be
       \label{EK-2}
       \hb^{k}=\hat{b} - y^{k}.
       \ee
    \State Select the index $i_k \in [m]$ and set
        \be
        \label{EK-3}
        x^{k}= x^{k-1} -  \omega \frac{\l x^{k-1}, A_{i_k} \r - \hb^{k}_{i_k}}{\p A_{i_k} \p^2} A_{i_k}.
        \ee
    \EndFor  
  \end{algorithmic}
\end{algorithm}
The following Lemma examines how the correction step in \eqref{EK-2} affects
the perturbed hyperplanes
\be\label{eq:hatHik}
\hat H_{\ik}=\{x \colon \l A_{\ik} , x \r =\hat b^k_{\ik} = b_{\ik} +r_{\ik} -y^k_{\ik}\}
\ee
in view of the unperturbed hyperplanes
\be\label{eq:Hik}
 H_{\ik}=\{x \colon \l A_{\ik} , x\r = b_{\ik} \}.
\ee

\begin{lemma}\label{lemma:shift} Consider $\hat H_{\ik}$ and $ H_{\ik}$ defined by
\eqref{eq:hatHik} and \eqref{eq:Hik}. Then 
\be\label{def:delta}
\hat H_{\ik}=\{x + \gamma_{\ik} \colon x \in H_{\ik}\}
\qquad\text{where}\qquad
\gamma_{\ik}=\delta_{\ik} A_\ik,\quad
\delta_\ik=\frac{r_{\ik}-y^k_{\ik}}{\|A_\ik\|^2}.
\ee
\end{lemma}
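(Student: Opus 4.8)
The plan is to use the elementary fact that translating a hyperplane by a vector $v$ shifts its right-hand side by the inner product of $v$ with the (common) normal direction. Both $\hat H_{\ik}$ and $H_{\ik}$ have the same normal $A_{\ik}$, so the set $\{x + \gamma_{\ik} \colon x \in H_{\ik}\}$ is again a hyperplane with normal $A_{\ik}$, and the whole claim reduces to checking that its offset equals $\hat b^k_{\ik} = b_{\ik} + r_{\ik} - y^k_{\ik}$, i.e. to computing $\l A_{\ik}, \gamma_{\ik}\r$.

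For the inclusion $\{x + \gamma_{\ik} \colon x \in H_{\ik}\} \subseteq \hat H_{\ik}$ I would take an arbitrary $x \in H_{\ik}$, so $\l A_{\ik}, x\r = b_{\ik}$, and expand $\l A_{\ik}, x + \gamma_{\ik}\r = b_{\ik} + \l A_{\ik}, \gamma_{\ik}\r$. Substituting $\gamma_{\ik} = \delta_{\ik} A_{\ik}$ gives $\l A_{\ik}, \gamma_{\ik}\r = \delta_{\ik}\|A_{\ik}\|^2 = r_{\ik} - y^k_{\ik}$ directly from the definition of $\delta_{\ik}$ in \eqref{def:delta}, hence $\l A_{\ik}, x+\gamma_{\ik}\r = b_{\ik} + r_{\ik} - y^k_{\ik} = \hat b^k_{\ik}$, so $x + \gamma_{\ik} \in \hat H_{\ik}$. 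For the reverse inclusion I would take $z \in \hat H_{\ik}$, set $x := z - \gamma_{\ik}$, and run the same computation backwards: $\l A_{\ik}, x\r = \hat b^k_{\ik} - (r_{\ik} - y^k_{\ik}) = b_{\ik}$, so $x \in H_{\ik}$ and $z = x + \gamma_{\ik}$ belongs to the translated set. Combining the two inclusions gives the asserted identity.

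There is no genuine obstacle here: the lemma is essentially a bookkeeping statement about affine translates of a hyperplane, and the only substantive line is $\l A_{\ik}, \delta_{\ik} A_{\ik}\r = \delta_{\ik}\|A_{\ik}\|^2$. The sole hypothesis implicitly required is $A_{\ik} \neq 0$ (already needed so that the division by $\|A_{\ik}\|^2$ defining $\delta_{\ik}$ makes sense), which guarantees that $\delta_{\ik}$, and hence $\gamma_{\ik}$, is well defined. The value of the lemma is not its proof but its role in the sequel: it isolates the single translation vector $\gamma_{\ik}$ by which the working hyperplane $\hat H_{\ik}$ differs from the ``true'' hyperplane $H_{\ik}$, with $\|\gamma_{\ik}\| = |\delta_{\ik}|\,\|A_{\ik}\| = |r_{\ik} - y^k_{\ik}|/\|A_{\ik}\| \to |r_{\ik}|/\|A_{\ik}\|$ once the column iteration drives $y^k$ toward $P_{\mc{R}(A)^\perp}(\hat b) = r$, which is what will later let us compare the extended Kaczmarz trajectory with the exact one.
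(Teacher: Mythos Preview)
Your proof is correct and essentially identical to the paper's: both verify the two inclusions by computing $\l A_{\ik}, x \pm \delta_{\ik} A_{\ik}\r$ and using $\delta_{\ik}\|A_{\ik}\|^2 = r_{\ik} - y^k_{\ik}$. One slip in your closing commentary, though: if $y^k \to r$ then $|r_{\ik} - y^k_{\ik}| \to 0$, so $\|\gamma_{\ik}\| \to 0$, not $|r_{\ik}|/\|A_{\ik}\|$ --- this is in fact the whole point of the column iteration, and is what Lemmata~\ref{lem:gammaMREK} and \ref{lem:gammaACEK} establish.
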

\begin{proof} Denote $i:=\ik$ for simplicity. For $x \in H_{i}$, we have $\l A_i,x+ \delta_{i} A_i \r=
\l A_i,x\r + \delta_i\|A_i\|^2 = b_i+ r_i - y_i^{k}=\hat b^k_i$. Thus, $x+ \delta_{i} A_i \in \hat H_{i} $.
Conversely, choose  $\hat x\in \hat H_{\ik}$ arbitrary and set $x=\hat x-\delta_{i} A_i$. Then
$\l A_i,x \r = \l A_i,\hat x \r - r_i +y_i^{k} = b_i+r_i -y_i^{k} - r_i +y_i^{k} = b_i$ holds.
Consequently $x\in H_{i}$.
\end{proof}

\begin{remark}
We observe that due to the initialization $y^{0}=\hat b$ of Algorithm \ref{alg:EK}, the decomposition \eqref{eq:b-dec} and the update rule \eqref{EK-1}, it always holds that
\begin{equation} \label{eq:y-r-in-RA}
y^{k} - r \in \mc{R}(A),\qquad \forall k \in \N.
\end{equation}
\end{remark}


\subsection{Control Sequences}\label{sec:control-sequences}

We will consider the following basic deterministic control sequences, cf. \cite{Censor1981}, besides randomized control sequences \cite{Strohmer2009,Zouzias2013}.
\begin{description}
\item[Cyclic control] Set $i_{k} = k \,\mrm{mod}\, m + 1$, 
$j_{k} = k \,\mrm{mod}\, n + 1$.

\item[Almost cyclic control] Select $i_{k} \in [m],\, j_{k} \in [n]$, such there exist integers $m_{0}, n_{0}$ with
\be \label{eq:ik_AC}
[m] \subseteq \{i_{k+1},\dotsc,i_{k+m_{0}}\} 
\ee
\be\label{eq:jk_AC}
[n] \subseteq \{j_{k+1},\dotsc,j_{k+n_{0}}\}, 
\ee
for every $k \in \N$.
\item[Set-based control] Select $\jk\in[n]$ and $\ik\in[m]$ such that
\be\label{eq:jk_MR}
j_{k} = \underset{j \in [n]}{\argmax} \, |\la A^{j},y^{k-1} \ra|,
\ee
\be\label{eq:ik_MR}
i_{k} = \underset{i \in [m]}{\argmax} \,
|\la A_{i},x^{k-1} \ra - \hat b_{i}^{k}|.
\ee
Note that these sequences depend on each other through \eqref{EK-1}--\eqref{EK-3}.
Sequence $(j_{k})_{k \in \N}$ relates to largest component $\|P_{\calR (A^{j})}(y^{k})\|$ of $y^{k}$ weighted by $\|A^{j}\|$,  whereas the sequence $(i_{k})_{k \in \N}$ relates to the largest distance of $x^{k}$, weighted by $\|A_{i}\|$, to the hyperplane defined by some row $A_{i}$ and the right-hand side $\hat b^{k}$, that is updated due to \eqref{EK-2}.
\item[Random control] Define the discrete distributions 
\begin{equation}\label{eq:def-pq-distributions}
p \in \Delta_{m},\quad p_{i} = \frac{\|A_{i}\|^{2}}{\|A\|^{2}_{F}}, 
\quad i \in [m],
\qquad\qquad
q \in \Delta_{n},\quad q_{j} = \frac{\|A^{j}\|^{2}}{\|A\|^{2}_{F}},
\quad j \in [n],
\end{equation}
and sample in each step $k$ of the iteration \eqref{EK-1}
\be\label{jk_rand}
 j_{k} \sim q
\ee
 
and each step $k$ of the iteration \eqref{EK-3}
\be\label{ik_rand}
i_{k} \sim p .
\ee 
\end{description}

\begin{remark}
\label{9may}
We note that the cyclic control is a special case of the almost cyclic control.
The maximal residual choice is also known as \emph{remote set control} \cite{Censor1981}. 
\end{remark}

\subsection{The Randomized Extended Kaczmarz Algorithm}

In the recent paper \cite{Zouzias2013}, authors considered Algorithm \ref{alg:REK}
with a random selection of the indices $\jk$ and $\ik$ and $\alpha=\omega=1$.
\begin{algorithm}
  \caption{Randomized Extended Kaczmarz Algorithm (REK)}
  \label{alg:REK}
  \begin{algorithmic}
    \Require{$A\in\R^{m\times n},\hat{b}\in\Rm, k_{max}\in \N$}
    \Return{Approximative least-squares solution}
    \Statex {\bf Initialization} $x^0 \in \R^n, y^0 = \hat{b}$, $\alpha, ~\omega \in [0, 2]$ 
    \For  {$k= 1, \dots , k_{max}$} 
       \State  Select the index $j_k \in [n]$  \emph{randomly} according to \eqref{jk_rand}
    \State and set
       \be
       \label{REK-1}
       y^{k} = y^{k-1} - \alpha {\l y^{k-1}, A^{j_k} \r} A^{j_k}.
       \ee 
    \State Update the right hand side as 
       \be
       \label{REK-2}
       \hb^{k}=\hat{b} - y^{k}.
       \ee
    \State Select the index $i_k \in [m]$ \emph{randomly} according to \eqref{ik_rand}
    \State and set
        \be
        \label{REK-3}
        x^{k}= x^{k-1} - \omega \frac{\l x^{k-1}, A_{i_k} \r - \hb^{k}_{i_k}}{\p A_{i_k} \p^2} A_{i_k}.
        \ee
    \EndFor  
  \end{algorithmic}
\end{algorithm}
They proved the following convergence result along with a convergence rate. 
\begin{theorem}
\label{thm:REK}
 For any $A$, $\hat b$, and $x^0=0$, the sequence $(x^k)_{k \in\N}$ generated by  REK Algorithm 
 \ref{alg:REK}  with $\alpha=\omega=1$ converges  in expectation  to the minimal norm solution $\xls$ of \eqref{eq:Ax=hatb}, with 
 the asymptotic error reduction factor 
$$
\EE \big[\|x^k - \xls \|\big] ~\leq~ \bigg( 1 - \frac{1}{\hat{k}^2(A)} \bigg)^{\lfloor k/2\rfloor} (1 + 2 k^2(A)) \|x_{LS} \|^2,
$$
where $\hat{k}(A) = \|A^+\|_2 \|A \|_F$ and $k(A) = \sigma_1 / \sigma_\texttt{r}$, where $\sigma_1 \geq \sigma_2 \geq \dots \geq \sigma_\texttt{r} > 0$ are the nonzero singular values of $A$ and $\texttt{r}=\rank(A)$.
\end{theorem}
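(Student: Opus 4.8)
The plan is to analyse the two interleaved recursions of Algorithm~\ref{alg:REK} in isolation and then couple them through an elementary scalar estimate; this essentially reproduces the argument of \cite{Zouzias2013}. \emph{The column recursion.} With $\alpha=1$, step \eqref{REK-1} performs a randomized Kaczmarz step for the consistent homogeneous system $A^{\top}y=0$, whose equations are indexed by the columns $A^{j}$ of $A$, whose solution set is $\calN(A^{\top})=\calR(A)^{\perp}$, and whose row-norm sampling distribution is exactly \eqref{jk_rand}. Since $y^{0}=\hat b$ and, by the decomposition \eqref{eq:b-dec}, the orthogonal projection of $\hat b$ onto $\calN(A^{\top})$ equals $r$, the randomized Kaczmarz convergence bound yields
\[
\EE\big[\|y^{k}-r\|^{2}\big]\le q^{k}\,\|b\|^{2},\qquad q:=1-\tfrac{1}{\hat{k}^{2}(A)},
\]
using $\hat{k}(A^{\top})=\|A^{\top}\|_{F}\,\|(A^{\top})^{+}\|_{2}=\hat{k}(A)$, $\hat b-r=b$, and that $\sigma_{\texttt r}$ is the smallest nonzero singular value of $A^{\top}$.

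\emph{The row recursion, one step.} Put $e^{k}:=x^{k}-\xls$. Using $A\xls=b$, $\l\xls,A_{i_k}\r=b_{i_k}$ and $\hb^{k}_{i_k}=\hat b_{i_k}-y^{k}_{i_k}=b_{i_k}+(r_{i_k}-y^{k}_{i_k})$, the update \eqref{REK-3} with $\omega=1$ rewrites as
\[
e^{k}=\Big(I-\tfrac{A_{i_k}A_{i_k}^{\top}}{\|A_{i_k}\|^{2}}\Big)e^{k-1}+\xi^{k},\qquad \xi^{k}:=\tfrac{r_{i_k}-y^{k}_{i_k}}{\|A_{i_k}\|^{2}}\,A_{i_k}.
\]
The first summand lies in $A_{i_k}^{\perp}$ while $\xi^{k}$ is parallel to $A_{i_k}$, so the cross term vanishes and $\|e^{k}\|^{2}=\big\|(I-A_{i_k}A_{i_k}^{\top}/\|A_{i_k}\|^{2})e^{k-1}\big\|^{2}+\|\xi^{k}\|^{2}$. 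I then condition on the history and on $y^{k}$ (independent of $i_k$) and take $\EE_{i_k}$ with $i_k\sim p$. Because $x^{0}=0$, all iterates and $\xls$ lie in $\calR(A^{\top})$, hence $e^{k-1}\in\calR(A^{\top})$ and $\|Ae^{k-1}\|^{2}\ge\sigma_{\texttt r}^{2}\,\|e^{k-1}\|^{2}$; combined with $\EE_{i_k}\big[\l e^{k-1},A_{i_k}\r^{2}/\|A_{i_k}\|^{2}\big]=\|Ae^{k-1}\|^{2}/\|A\|_{F}^{2}$ this contracts the first term by the factor $q$. For the perturbation term the row-norm weights cancel exactly, $\EE_{i_k}\|\xi^{k}\|^{2}=\sum_{i}\tfrac{\|A_{i}\|^{2}}{\|A\|_{F}^{2}}\cdot\tfrac{(r_{i}-y^{k}_{i})^{2}}{\|A_{i}\|^{2}}=\|y^{k}-r\|^{2}/\|A\|_{F}^{2}$. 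Taking full expectations and writing $a_{k}:=\EE\|e^{k}\|^{2}$ gives
\[
a_{k}\le q\,a_{k-1}+\tfrac{1}{\|A\|_{F}^{2}}\,\EE\|y^{k}-r\|^{2}\le q\,a_{k-1}+\tfrac{\|b\|^{2}}{\|A\|_{F}^{2}}\,q^{k}.
\]

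\emph{Unrolling.} With $a_{0}=\|\xls\|^{2}$ the recursion gives $a_{k}\le q^{k}\|\xls\|^{2}+\tfrac{\|b\|^{2}}{\|A\|_{F}^{2}}\,k\,q^{k}$. I bound $q^{k}\le q^{\lfloor k/2\rfloor}$ for the first term. For the second, $\|b\|^{2}=\|A\xls\|^{2}\le\sigma_{1}^{2}\|\xls\|^{2}$, and writing $k\,q^{k}=q^{\lfloor k/2\rfloor}\cdot k\,q^{k-\lfloor k/2\rfloor}\le q^{\lfloor k/2\rfloor}\cdot k\,q^{k/2}$ with $\sup_{t\ge0}t\,q^{t/2}\le\tfrac{2}{-\ln q}\le 2\,\hat{k}^{2}(A)$ (from $-\ln q\ge 1/\hat{k}^{2}(A)$), one obtains $\tfrac{\|b\|^{2}}{\|A\|_{F}^{2}}\,k\,q^{k}\le \tfrac{\sigma_{1}^{2}}{\|A\|_{F}^{2}}\,2\hat{k}^{2}(A)\,q^{\lfloor k/2\rfloor}\|\xls\|^{2}=2\,k^{2}(A)\,q^{\lfloor k/2\rfloor}\|\xls\|^{2}$, using $\hat{k}^{2}(A)/\|A\|_{F}^{2}=1/\sigma_{\texttt r}^{2}$ and $\sigma_{1}^{2}/\sigma_{\texttt r}^{2}=k^{2}(A)$. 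Summing the two bounds yields the asserted estimate (with $\EE\|x^{k}-\xls\|^{2}$ on the left).

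\emph{Main obstacle.} The two delicate points are identifying the limit of the column recursion as $r$ together with the fact that the pertinent condition number is still $\hat{k}(A)$, and guaranteeing $e^{k-1}\in\calR(A^{\top})$ so that the spectral-gap lower bound $\sigma_{\texttt r}^{2}$ may be invoked — this is exactly what the initialization $x^{0}=0$ and the choice of $\xls$ as the minimal-norm solution provide. The rest is the scalar bookkeeping in the last step that converts the $k\,q^{k}$ term into the stated $q^{\lfloor k/2\rfloor}(1+2k^{2}(A))$ form; note also that the orthogonality used to split $\|e^{k}\|^{2}$ depends on $\omega=1$ (for $\omega\in(0,2)$ one would pick up the extra factor $\omega(2-\omega)$ in the contraction).
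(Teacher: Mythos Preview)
The paper does not supply its own proof of this theorem; it is quoted verbatim as the main result of \cite{Zouzias2013} (note the sentence ``They proved the following convergence result along with a convergence rate'' immediately preceding the statement). Your argument is correct and, as you yourself say, it is essentially the Zouzias--Freris proof: randomized Kaczmarz contraction for the column recursion towards $r$, the orthogonal splitting of $e^{k}$ in the row step (valid because $\omega=1$), the observation $e^{k}\in\calR(A^{\top})$ from $x^{0}=0$ and the minimal-norm choice of $\xls$, and finally the scalar unrolling $a_{k}\le q^{k}\|\xls\|^{2}+k q^{k}\|b\|^{2}/\|A\|_{F}^{2}$ together with $\sup_{t\ge 0}t\,q^{t/2}\le 2/(-\ln q)\le 2\hat{k}^{2}(A)$. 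Your parenthetical remark that the left-hand side of the displayed bound should be $\EE\|x^{k}-\xls\|^{2}$ rather than $\EE\|x^{k}-\xls\|$ is also well taken; this is a typographical slip carried over in the paper's restatement of the cited result.
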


\subsection{The MREK Algorithm}

In this subsection we will show that $\|\gamma_\ik\|$ from \eqref{def:delta} decays geometrically 
for the maximal residual choice $\ik$ from \eqref{eq:ik_MR} 
and, in particular, that the error norms are absolutely summable.
These results will be in turn used to establish convergence of the MREK algorithm in Section \ref{sec:convergence}. 
We first collect some facts and state a basic assumption. 
{For any {invertible}  matrix $D \in \R^{n \times n}$  we have (cf.~\eqref{eq:def-Sset})
\begin{equation}
x \in LSS(A;\hat b) \qquad\gdw\qquad 
D^{-1} x \in LSS(A D;\hat b).
\end{equation}
As a consequence, by choosing $D = \Diag(\|A^{1}||^{-1},\dotsc,\|A^{n}\|^{-1})$, we may assume w.l.o.g.~that
\begin{equation} \label{eq:cols-normalized}
\|A^{j}\| = 1,\qquad j \in [n].
\end{equation}
First we need a preparatory result, which can be easily proved, see e.g.~\cite{Ansorge1984}.

\begin{lemma}
\label{lem1}
Let $\alpha \in (0, 2)$, $\delta \geq 0$ be defined by 
\be
\label{10}
\delta=\inf \{ \p A^{\T} \zeta \p, \;\zeta \in \calN(A^{\T})^{\perp}=\calR(A), \;\p \zeta \p = 1 \},
\ee
and let $A=U \Sigma V^{\T}, ~~\Sigma=diag(\sigma_1, \dots, \sigma_r, 0, \dots, 0)$ be 
a singular value decomposition of $A$, where $r=\rank(A)$. Then
\be
\label{12}
0 ~<~ \delta ~~=~~ \sigma_r ~~\leq~~ \sigma_1.
\ee
\end{lemma}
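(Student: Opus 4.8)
The plan is to diagonalize via the SVD and reduce the infimum in \eqref{10} to a minimization over the coordinates of $\zeta$ in the basis of left singular vectors. First I would note that $\calR(A) = \calN(A^\T)^\perp$ is spanned by the columns $u_1,\dotsc,u_r$ of $U$ that correspond to the nonzero singular values $\sigma_1 \geq \dotsb \geq \sigma_r > 0$. Hence any $\zeta \in \calR(A)$ with $\|\zeta\| = 1$ can be written $\zeta = \sum_{i=1}^{r} c_i u_i$ with $\sum_{i=1}^{r} c_i^2 = 1$. Applying $A^\T = V \Sigma^\T U^\T$ and using orthonormality of the $u_i$ gives $A^\T \zeta = \sum_{i=1}^{r} c_i \sigma_i v_i$, and orthonormality of the $v_i$ then yields $\|A^\T \zeta\|^2 = \sum_{i=1}^{r} c_i^2 \sigma_i^2$.

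Next I would bound this quantity. Since $\sigma_1 \geq \dotsb \geq \sigma_r > 0$ (the last strict inequality precisely because $r = \rank(A)$), one has $\sigma_r^2 = \sigma_r^2 \sum_{i=1}^{r} c_i^2 \leq \sum_{i=1}^{r} c_i^2 \sigma_i^2$, with equality attained at $\zeta = u_r$ (that is, $c_r = 1$ and all other $c_i = 0$), which indeed lies in $\calR(A)$ and has unit norm. Because the unit sphere of the finite-dimensional subspace $\calR(A)$ is compact and $\zeta \mapsto \|A^\T \zeta\|$ is continuous, the infimum in \eqref{10} is attained and equals $\sigma_r$. Thus $\delta = \sigma_r$, the positivity $\delta > 0$ follows from $\sigma_r > 0$, and $\sigma_r \leq \sigma_1$ is immediate from the ordering of the singular values, which is exactly \eqref{12}.

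There is no genuine obstacle here; the only points requiring a little care are: (i) identifying $\calR(A)$ correctly with $\mrm{span}\{u_1,\dotsc,u_r\}$ — equivalently with $\calN(A^\T)^\perp$ as written in \eqref{10} — so that restricting $\zeta$ to $\calR(A)$ exactly removes the zero singular directions and leaves $\sigma_r$ as the smallest relevant value; and (ii) noting that the parameter $\alpha \in (0,2)$ does not enter this particular statement and is recorded only for later use. With these observations the chain $0 < \delta = \sigma_r \leq \sigma_1$ is established.
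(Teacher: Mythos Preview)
Your proof is correct and complete. The paper does not actually supply its own argument for this lemma; it merely states that the result ``can be easily proved, see e.g.~\cite{Ansorge1984}'' and moves on. Your SVD-based computation---writing $\zeta \in \calR(A)$ in the orthonormal basis $u_1,\dotsc,u_r$, evaluating $\|A^\T\zeta\|^2 = \sum_{i=1}^r c_i^2\sigma_i^2$, and then minimizing over the unit sphere---is exactly the standard route and fills in what the paper leaves to the reference. Your side remarks about compactness (so the infimum is attained) and about $\alpha$ playing no role here are accurate as well.
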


\begin{remark}
\label{newr} 
Since $\sigma_{i}^{2},\, i \in [r]$, are the eigenvalues of $A^{\T} A$ and $A A^{\T}$, respectively, scaling of $A \leftarrow \frac{1}{c} A$ by some factor $c > 0$ scales the singular values $\sigma_{i} \leftarrow \sigma_{i}/c$ as well.
Thus, by scaling the linear system \eqref{eq:Ax=b},
$$
\p Ax - \hat{b} \p = \min! \quad\Leftrightarrow\quad \p \frac{\sqrt{n}}{\sigma_r} Ax - \frac{\sqrt{n}}{\sigma_r} \hat{b} \p = \min!,
$$
we may assume w.l.o.g.~that $\delta \sqrt{\alpha (2 - \alpha)}  \leq \sqrt{n}$, hence
\be
\label{17}
1 - \frac{\delta^2 \alpha (2 - \alpha)}{n} \in [0, 1).
\ee
\end{remark}


\begin{lemma}
\label{lem:gammaMREK}
Let $\alpha \in (0, 2)$,  $k \in\N$ denote an arbitrary fixed number of iterations of  Algorithm \ref{alg:MREK}, 
with $i_k$ selected according to the maximal residual choice \eqref{eq:ik_MR}, and let
$\delta_\ik \in \R $ and $ \gamma_\ik \in \R^n$ be given by
\eqref{def:delta}.
Then, 
\begin{itemize}
\item[(i)]
there exist $M \geq 0$ and $\gamma \in [0, 1)$, independent on $k$, such that 
\be
\label{eq:gammaMREK}
\p \gamma_\ik \p ~~\leq~~ M \gamma^k,
\ee
\item[(ii)] 
$(\p \gamma_\ik \p^2)_{k\in\N} \in \ell_+ \cap \ell^1$,
\item[(iii)] 
$y^{k} \to r$ for $k \to \infty$, with $r$ given by \eqref{eq:b-dec}.
\end{itemize}
\end{lemma}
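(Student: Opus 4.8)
The core object to control is $y^k$, which by \eqref{EK-1} evolves by a single coordinate-projection step: since $\|A^{j}\|=1$ after normalization \eqref{eq:cols-normalized}, the update is $y^{k}=y^{k-1}-\alpha\langle y^{k-1},A^{j_k}\rangle A^{j_k}$. By the Remark, $y^{k}-r\in\mc{R}(A)$ for all $k$, and since $r\in\mc{N}(A^{\T})$ we have $\langle y^{k},A^{j}\rangle=\langle y^{k}-r,A^{j}\rangle$ for every $j$; hence the dynamics of $z^{k}:=y^{k}-r$ stay entirely inside $\mc{R}(A)$ and satisfy the same recursion $z^{k}=z^{k-1}-\alpha\langle z^{k-1},A^{j_k}\rangle A^{j_k}$. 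So parts (i)--(iii) all reduce to showing $z^{k}\to 0$ geometrically, because $\gamma_{\ik}=\delta_{\ik}A_{\ik}$ with $\delta_{\ik}=(r_{\ik}-y^k_{\ik})/\|A_{\ik}\|^{2}=-z^k_{\ik}/\|A_{\ik}\|^2$, so $\|\gamma_{\ik}\|\le\|z^k\|/\|A_{\ik}\|$, and a uniform lower bound $\min_i\|A_i\|>0$ (a standing nondegeneracy assumption) converts a geometric bound on $\|z^k\|$ into \eqref{eq:gammaMREK}; summability in (ii) and $y^k\to r$ in (iii) then follow immediately.

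The main work is the geometric decay of $\|z^k\|$ under the \emph{maximal-residual} column choice $j_{k}=\argmax_{j}|\langle A^{j},y^{k-1}\rangle|=\argmax_{j}|\langle A^{j},z^{k-1}\rangle|$. The plan is to expand
$$
\|z^{k}\|^{2}=\|z^{k-1}\|^{2}-\alpha(2-\alpha)\,\langle z^{k-1},A^{j_k}\rangle^{2}
=\|z^{k-1}\|^{2}-\alpha(2-\alpha)\,\|A^{\T}z^{k-1}\|_{\infty}^{2},
$$
using $\|A^{j_k}\|=1$ and the defining property of $j_k$. Then bound $\|A^{\T}z^{k-1}\|_{\infty}^{2}\ge\frac{1}{n}\|A^{\T}z^{k-1}\|_{2}^{2}$, and since $z^{k-1}\in\mc{R}(A)=\mc{N}(A^{\T})^{\perp}$, Lemma~\ref{lem1} gives $\|A^{\T}z^{k-1}\|_{2}\ge\delta\|z^{k-1}\|$. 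Combining,
$$
\|z^{k}\|^{2}\le\Big(1-\frac{\delta^{2}\alpha(2-\alpha)}{n}\Big)\|z^{k-1}\|^{2},
$$
and by Remark~\ref{newr} the factor $\gamma^{2}:=1-\delta^{2}\alpha(2-\alpha)/n$ lies in $[0,1)$ after the harmless rescaling \eqref{17}. Iterating yields $\|z^{k}\|\le\gamma^{k}\|z^{0}\|=\gamma^{k}\|\hat b-r\|=\gamma^{k}\|b\|$, which is (i) with $M=\|b\|/\min_i\|A_i\|$; part (ii) is then $\sum_k\|\gamma_{\ik}\|^2\le M^2\sum_k\gamma^{2k}<\infty$, and (iii) is $\|y^k-r\|=\|z^k\|\to 0$.

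The one subtlety I expect to need care is the invariance claim $\langle z^{k-1},A^{j_k}\rangle=\langle y^{k-1},A^{j_k}\rangle$ used to rewrite the maximal-residual rule in terms of $z$: this rests precisely on $r\perp\mc{R}(A)$, i.e.\ $A^{\T}r=0$ from \eqref{eq:b-dec}, together with the Remark's observation $y^k-r\in\mc{R}(A)$, so one should state these explicitly before invoking the expansion. A secondary point is that Lemma~\ref{lem1}'s bound $\delta=\sigma_r>0$ only applies on $\mc{R}(A)$, which is why keeping the iteration confined to $\mc{R}(A)$ throughout (rather than working with $y^k$ directly) is essential; this is exactly what the Remark guarantees. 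Everything else is the routine one-step POCS energy identity, so no genuine obstacle arises beyond bookkeeping.
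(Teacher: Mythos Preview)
Your proof is correct and follows essentially the same route as the paper: the paper also subtracts $r$ from $y^{k}$, uses $r\perp A^{j}$ to obtain the one-step energy identity $\|y^{k}-r\|^{2}=\|y^{k-1}-r\|^{2}-\alpha(2-\alpha)\langle y^{k-1},A^{j_k}\rangle^{2}$, bounds the maximal-residual term below by $\tfrac{1}{n}\sum_{j}\langle y^{k-1}-r,A^{j}\rangle^{2}=\tfrac{1}{n}\|A^{\T}(y^{k-1}-r)\|^{2}$, invokes Lemma~\ref{lem1} on $\mc{R}(A)$ to get the contraction factor $1-\delta^{2}\alpha(2-\alpha)/n$, and then reads off \eqref{eq:gammaMREK} with $M=\|b\|/\min_{i}\|A_{i}\|$. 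The only cosmetic difference is your explicit naming of $z^{k}:=y^{k}-r$.
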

\begin{proof} 
\begin{enumerate}[(i)]
\item
Update rule \eqref{MREK-2} yields
\[
y^{k}-r = (y^{k-1}-r) - \alpha \la y^{k-1}, A^{j_{k}} \ra A^{j_{k}}.
\]
Using $y^{k}-r \in \mc{R}(A),\, \forall k$, and $\mc{R}(A)^{\perp} \ni r \perp A^{j},\, j \in [n]$, we compute
\begin{equation} \label{yk-r-2}
\|y^{k}-r\|^{2} = \|y^{k-1}-r\|^{2} - \alpha (2 - \alpha) \la y^{k-1}, A^{j_{k}} \ra^{2}.
\end{equation}
Based on property \eqref{MREK-1} defining the index $j_{k} \in [n]$, we upper bound
\[
\|y^{k}-r\|^{2} \leq \|y^{k-1}-r\|^{2} - \frac{\alpha (2 - \alpha)}{n} 
\sum_{j \in [n]} \la y^{k-1}, A^{j} \ra^{2}.
\]
Exploiting again $r \perp A^{j},\, j \in [n]$, and Lemma \ref{lem1}, we obtain
\begin{align*}
\|y^{k}-r\|^{2} &\leq \|y^{k-1}-r\|^{2} - \frac{\alpha (2 - \alpha)}{n} 
\sum_{j \in [n]} \la y^{k-1}-r, A^{j} \ra^{2} \\
&= \|y^{k-1}-r\|^{2}\bigg(1 - \frac{\alpha (2 - \alpha)}{n} 
\Big\| A^{\T} \frac{y^{k-1}-r}{\|y^{k-1}-r\|} \Big\|^{2} \bigg)
\leq \Big(1-\frac{\delta^{2} \alpha (2 - \alpha)}{n}\Big) \|y^{k-1}-r\|^{2} \\
&\leq \Big(1-\frac{\delta^{2} \alpha (2 - \alpha)}{n}\Big)^{k} \|y^{0}-r\|^{2}.
\end{align*}
Thus, with $y^{0}-r = b$,
\begin{equation} \label{MGamma-MREK}
\|\gamma_{i_{k}}\| = \frac{1}{\|A_{i_{k}}\|} |(r-y^{k})_{i_{k}}|
\leq \Big(1-\frac{\delta^{2} \alpha (2 - \alpha)}{n}\Big)^{k/2} 
\frac{\|b\|}{\min_{i \in [m]} \|A_{i}\|} =: \gamma^{k} M,
\end{equation}
\be
\label{NEW1}
\gamma = \left ( 1 - \frac{\delta^2 \alpha (2 - \alpha)}{n} \right )^{1/2}, 
\ee
and $\gamma \in [0,1)$ due to \eqref{17}.
\item
Using \eqref{MGamma-MREK}, $\gamma\in[0,1)$ and convergence of geometric series, we get
\[
\sum_{k\in\N} \p \gamma_\ik \p^2 \le \sum_{k\in\N} M^{2} \gamma^{2 k} 
= \frac{M^2}{1-\gamma^2}<\infty.
\]
\item
The derivation of (i) shows that relation \eqref{MGamma-MREK} is valid for every $i \in [m]$. Hence, since $\gamma \in [0,1)$,
\[
\|r-y^{k}\|_{\infty} \leq const.~\gamma^{k} \quad\to\quad 0
\qquad \text{for}\quad k \to \infty.
\]
\end{enumerate}
\end{proof}

\begin{algorithm}
\caption{Algorithm Maximal Residual Extended Kaczmarz (MREK)}
\label{alg:MREK}
\begin{algorithmic}
\Require
{$A\in\R^{m\times n},\hat{b}\in\Rm, k_{max}\in \N$}\\
\Return
{Approximative least-squares solution}
\Statex 
{\bf Initialization} $x^0 \in \R^n, y^0 = \hat{b}; \alpha, ~\omega \in [0, 2]$ \\
\For  
{$k= 1, \dots , k_{max}$} \\
\State  
Select the index $j_k \in [n]$ such that \\
       \be
       \label{MREK-1}
       | \l A^{j_k}, y^{k-1} \r | ~~\geq~~  | \l A^{j}, y^{k-1} \r |, ~~\forall j\in [n],
       \ee
\State and set
       \be
       \label{MREK-2}
       y^{k} = y^{k-1} - \alpha {\l y^{k-1}, A^{j_k} \r} A^{j_k}.
       \ee 
\State
 Update the right hand side as 
       \be
       \label{MREK-3}
       \hb^{k}=\hat{b} - y^{k}.
       \ee
\State 
Select the index $i_k \in [m]$ such that 
       \be
       \label{MREK-4}
       |\l A_{i_k}, x^{k-1} \r - \hb^{k}_{i_k}| ~~\geq~~ | \l A_{i}, x^{k-1} \r - \hb^{k}_{i}|, ~~\forall~ i\in [m],
        \ee
\State and set
        \be
        \label{MREK-5}
        x^{k}= x^{k-1} - \omega  \frac{\l x^{k-1}, A_{i_k} \r - \hb^{k}_{i_k}}{\p A_{i_k} \p^2} A_{i_k}.
        \ee
\EndFor  
\end{algorithmic}
\end{algorithm}
\subsection{The ACEK Algorithm}
In this section we will establish a result analogous to Lemma \ref{lem:gammaMREK} for Algorithm \ref{alg:ACEK} that corresponds to Algorithm \ref{alg:EK} in the case of the almost cyclic index selection scheme. First of all, related to (\ref{ACEK-1}) we introduce the notations
\be
\label{n1}
\varphi_j(y)=y -  \frac{\l y, A^{j} \r}{\p A^{j} \p^2} A^{j}, ~~\varphi^{\alpha}_j(y)= y - \alpha \frac{\l y, A^{j} \r}{\p A^{j} \p^2} A^{j},
\ee
and observe that the application $\varphi^{\alpha}_j$ is no more a projection and we have the equalities  
\be
\label{n2}
\varphi^{\alpha}_j(y) = ((1-\alpha)I + \alpha \varphi_j) (y).
\ee
We will replay below Lemma 21 from \cite{cp98} (see also \cite{natt}) with respect to the above applications.
\begin{lemma}
\label{lem21}
For any $\alpha \in (0,2)$, $y \in \R^m$, $j=1,\dots,n$ 
 the following are true
\bea
\p \varphi^{\alpha}_j \p &\leq& 1,
\label{eq68}\\
\p \varphi^{\alpha}_j y \p^2 - \p y \p^2 &=& (2- \alpha) \alpha
(\p \varphi_j y \p^2 - \p y \p^2).
\label{eq69}
\eea
\end{lemma}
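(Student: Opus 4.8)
The plan is to exploit the affine-combination structure in \eqref{n2}, which writes $\varphi^{\alpha}_j$ as a relaxation of the orthogonal projection $\varphi_j$ onto the hyperplane $\{y\colon\la y,A^{j}\ra=0\}$ (using $\|A^{j}\|=1$ after normalization \eqref{eq:cols-normalized}). For \eqref{eq68}, since $\varphi_j$ is an orthogonal projection onto a subspace it is firmly nonexpansive and in particular $\|\varphi_j\|\le1$; writing $\varphi^{\alpha}_j=(1-\alpha)I+\alpha\varphi_j$ one would like to bound $\|\varphi^{\alpha}_j y\|\le|1-\alpha|\,\|y\|+\alpha\|\varphi_j y\|\le(|1-\alpha|+\alpha)\|y\|$, but for $\alpha\in(1,2)$ this gives $\|\varphi^{\alpha}_j\|\le2\alpha-1>1$, which is too weak. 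Instead I would argue directly: decompose $y=P y+(I-P)y$ where $P=\varphi_j$ is the orthogonal projector, so $Py\perp(I-P)y$, and compute $\varphi^{\alpha}_j y=Py+(1-\alpha)(I-P)y$. By Pythagoras, $\|\varphi^{\alpha}_j y\|^2=\|Py\|^2+(1-\alpha)^2\|(I-P)y\|^2$, and since $(1-\alpha)^2\le1$ for $\alpha\in[0,2]$ this is $\le\|Py\|^2+\|(I-P)y\|^2=\|y\|^2$, giving \eqref{eq68}.

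For \eqref{eq69}, the same orthogonal decomposition does the whole job. From $\|\varphi^{\alpha}_j y\|^2=\|Py\|^2+(1-\alpha)^2\|(I-P)y\|^2$ and $\|y\|^2=\|Py\|^2+\|(I-P)y\|^2$ I get
\[
\|\varphi^{\alpha}_j y\|^2-\|y\|^2=\big((1-\alpha)^2-1\big)\|(I-P)y\|^2=-(2\alpha-\alpha^2)\|(I-P)y\|^2=(2-\alpha)\alpha\big(\|\varphi_j y\|^2-\|y\|^2\big),
\]
where the last equality uses $\|\varphi_j y\|^2-\|y\|^2=\|Py\|^2-\|y\|^2=-\|(I-P)y\|^2$. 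This is exactly \eqref{eq69}. Equivalently, one can skip the projection picture entirely and just expand both sides using $\|A^{j}\|=1$: with $t:=\la y,A^{j}\ra$, $\|\varphi^{\alpha}_j y\|^2=\|y\|^2-2\alpha t^2+\alpha^2 t^2=\|y\|^2-\alpha(2-\alpha)t^2$, and the $\alpha=1$ case gives $\|\varphi_j y\|^2-\|y\|^2=-t^2$, so \eqref{eq69} follows by comparison; \eqref{eq68} then follows from $\alpha(2-\alpha)\ge0$ on $[0,2]$.

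There is no real obstacle here — the statement is the classical identity for relaxed projections (this is why the excerpt calls it a ``replay'' of \cite[Lemma 21]{cp98}). The only point requiring a moment's care is \eqref{eq68}: one must resist the triangle-inequality bound and instead use orthogonality (Pythagoras) so that the constant stays exactly $1$ across the full range $\alpha\in(0,2)$; this is also where the normalization $\|A^{j}\|=1$ is implicitly used to identify $\varphi_j$ with a genuine orthogonal projector.
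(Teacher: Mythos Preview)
Your argument is correct. The orthogonal decomposition $y=Py+(I-P)y$ with $P=\varphi_j$ immediately gives both claims via Pythagoras, exactly as you wrote; the alternative scalar computation with $t=\la y,A^{j}\ra/\|A^{j}\|$ is equally valid.

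There is in fact nothing to compare against: the paper does not prove Lemma~\ref{lem21} at all but merely restates (``replays'') it from \cite[Lemma~21]{cp98}, with an additional pointer to \cite{natt}. Your write-up therefore supplies what the paper only cites.

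One small clarification is worth making. You repeatedly invoke the column normalization \eqref{eq:cols-normalized} and say it is ``implicitly used to identify $\varphi_j$ with a genuine orthogonal projector.'' That is not needed here: the definition \eqref{n1} already carries the factor $\|A^{j}\|^{-2}$, so $\varphi_j$ is the orthogonal projector onto $(A^{j})^{\perp}$ regardless of whether the columns are normalized. (Note also that \eqref{eq:cols-normalized} appears in the MREK subsection, whereas Lemma~\ref{lem21} sits in the ACEK subsection, where \eqref{ACEK-1} explicitly retains $\|A^{j_k}\|^{2}$.) In your direct-expansion alternative, simply take $t=\la y,A^{j}\ra/\|A^{j}\|$ instead of $t=\la y,A^{j}\ra$ and the same identities go through verbatim without any normalization hypothesis.
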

We can now state the result analogous to Lemma \ref{lem:gammaMREK}.
\begin{lemma}
\label{lem:gammaACEK}
Let $k \geq n_{0} \in\N$ denote an arbitrary fixed number of iterations of  Algorithm \ref{alg:ACEK} with $n_{0}$ defined by \eqref{eq:jk_AC}, and 
with $i_k, j_k$ selected according to the almost cyclic choice \eqref{eq:ik_AC} and \eqref{eq:jk_AC}, respectively. Let
$\delta_\ik \in \R $ and $ \gamma_\ik \in \R^n$ be given by
\eqref{def:delta}.
Then, 
\begin{itemize}
\item[(i)]
there exist $M \geq 0$ and $\gamma \in [0, 1)$, independent on $k$, such that 
\be
\label{eq:gammaACEK}
\p \gamma_\ik \p ~~\leq~~ M \gamma^{n},
\ee
with $k = n \cdot n_{0} + l_{0},\,n \in \N,\, n_{0} \geq l_{0} \in \N_{0}$,
\item[(ii)] 
$(\p \gamma_\ik \p^2)_{k\in\N} \in \ell_+ \cap \ell^1$,
\item[(iii)] 
$y^{k} \to r$ for $k \to \infty$, with $r$ given by \eqref{eq:b-dec}.
\end{itemize}
\end{lemma}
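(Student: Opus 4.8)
The plan is to mirror the proof of Lemma~\ref{lem:gammaMREK}, replacing the one-step geometric contraction of $\|y^{k}-r\|$ by a contraction over blocks of $n_{0}$ consecutive $y$-updates; the row index $i_{k}$ will play no role here, since by \eqref{def:delta} one always has $\|\gamma_{i_{k}}\|\le\|r-y^{k}\|_{\infty}/\min_{i\in[m]}\|A_{i}\|$ for any $i_{k}\in[m]$. I would first set $z^{k}:=y^{k}-r$, note $z^{k}\in\mc{R}(A)$ for all $k$ by \eqref{eq:y-r-in-RA}, and observe that, because $r\perp A^{j}$ for every $j\in[n]$ and the columns are normalized by \eqref{eq:cols-normalized}, the $y$-iteration of Algorithm~\ref{alg:ACEK} is equivalent to $z^{k}=\varphi^{\alpha}_{j_{k}}(z^{k-1})$ in the notation \eqref{n1} (the linear map $\varphi^{\alpha}_{j}$ fixes $r$). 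Lemma~\ref{lem21} then gives, via \eqref{eq68}, that $(\|z^{k}\|)_{k}$ is nonincreasing, and, via \eqref{eq69} together with $\|\varphi_{j}z\|^{2}=\|z\|^{2}-\la z,A^{j}\ra^{2}$ for unit-norm columns, the exact identity $\|z^{k}\|^{2}=\|z^{k-1}\|^{2}-\alpha(2-\alpha)\la z^{k-1},A^{j_{k}}\ra^{2}$. Telescoping this over a block $\{k+1,\dotsc,k+n_{0}\}$ would produce
\[
\|z^{k}\|^{2}-\|z^{k+n_{0}}\|^{2}=\alpha(2-\alpha)\sum_{l=1}^{n_{0}}\la z^{k+l-1},A^{j_{k+l}}\ra^{2}.
\]

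The main obstacle — the technical heart of the argument, and the extended-Kaczmarz counterpart of the classical almost-cyclic estimate (cf.~\cite{cp98,natt}) — is to bound the block sum below by a fixed positive multiple of $\|z^{k}\|^{2}$. Here I would use the almost-cyclic property \eqref{eq:jk_AC}: each $j\in[n]$ equals $j_{k+l(j)}$ for some $l(j)\in\{1,\dotsc,n_{0}\}$. Expanding $z^{k+l(j)-1}=z^{k}-\alpha\sum_{s=1}^{l(j)-1}\la z^{k+s-1},A^{j_{k+s}}\ra A^{j_{k+s}}$, pairing with $A^{j}$, and then applying $|\la A^{j'},A^{j}\ra|\le\|A^{j'}\|\,\|A^{j}\|=1$, the triangle inequality and $(\sum_{t=1}^{N}a_{t})^{2}\le N\sum_{t=1}^{N}a_{t}^{2}$, one obtains for every $j\in[n]$ that $\la z^{k},A^{j}\ra^{2}\le n_{0}\max\{1,\alpha^{2}\}\sum_{l=1}^{n_{0}}\la z^{k+l-1},A^{j_{k+l}}\ra^{2}$. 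Summing over $j\in[n]$ and using $z^{k}\in\mc{R}(A)=\mc{N}(A^{\T})^{\perp}$ with Lemma~\ref{lem1}, so that $\sum_{j\in[n]}\la z^{k},A^{j}\ra^{2}=\|A^{\T}z^{k}\|^{2}\ge\delta^{2}\|z^{k}\|^{2}$, the telescoped identity becomes
\[
\|z^{k+n_{0}}\|^{2}\le\Big(1-\frac{\alpha(2-\alpha)\,\delta^{2}}{n\,n_{0}\max\{1,\alpha^{2}\}}\Big)\|z^{k}\|^{2}=:\gamma^{2}\,\|z^{k}\|^{2},
\]
and $\gamma\in[0,1)$ because $\delta\le\|A\|_{2}\le\|A\|_{F}=\sqrt{n}$ (Lemma~\ref{lem1}, unit-norm columns) and $\alpha(2-\alpha)\le1$ force the bracket into $[0,1]$, while $\alpha\in(0,2)$ and $\delta>0$ make it strictly less than $1$ (one may alternatively invoke \eqref{17}, exactly as in the proof of Lemma~\ref{lem:gammaMREK}).

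From this block contraction the three assertions would follow as in Lemma~\ref{lem:gammaMREK}. Writing $k=n\cdot n_{0}+l_{0}$ with $n_{0}\ge l_{0}\ge0$, monotonicity of $\|z^{k}\|$ on the trailing partial block and $z^{0}=\hat b-r=b$ give $\|z^{k}\|\le\|z^{n n_{0}}\|\le\gamma^{n}\|b\|$, hence, by \eqref{def:delta},
\[
\|\gamma_{i_{k}}\|=\frac{|(r-y^{k})_{i_{k}}|}{\|A_{i_{k}}\|}\le\frac{\|z^{k}\|}{\min_{i\in[m]}\|A_{i}\|}\le\frac{\|b\|}{\min_{i\in[m]}\|A_{i}\|}\,\gamma^{n}=:M\gamma^{n},
\]
which is (i). For (ii) I would group the indices $k$ into blocks of length $n_{0}$, obtaining $\sum_{k\in\N}\|\gamma_{i_{k}}\|^{2}\le n_{0}M^{2}\sum_{n\ge0}\gamma^{2n}=n_{0}M^{2}/(1-\gamma^{2})<\infty$, with all terms nonnegative, so $(\|\gamma_{i_{k}}\|^{2})_{k}\in\ell_{+}\cap\ell^{1}$. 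Finally (iii) is immediate, since $\|r-y^{k}\|_{\infty}\le\|z^{k}\|\le\|b\|\,\gamma^{n}\to0$ as $k\to\infty$.
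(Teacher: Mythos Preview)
Your argument is correct, but it takes a genuinely different route from the paper's. The paper does not derive an explicit contraction factor; instead, in Step~1 it writes the block composition $\Phi^{\alpha}_{k}=\varphi^{\alpha}_{j_{k+n_{0}}}\circ\dotsb\circ\varphi^{\alpha}_{j_{k+1}}$, invokes \cite{cp98} to assert that the restricted operator $\tilde{\Phi}^{\alpha}_{k}=\Phi^{\alpha}_{k}P_{\mc{R}(A)}$ satisfies $\|\tilde{\Phi}^{\alpha}_{k}\|<1$, and then observes that because each index $j_{k+l}$ lies in $[n]$ there are only finitely many possible index tuples, hence finitely many operators $\tilde{\Phi}^{\alpha}_{k}$, so $\hat{\gamma}:=\max_{k}\|\tilde{\Phi}^{\alpha}_{k}\|<1$. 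From $P_{\mc{N}(A^{\T})}(y^{k})=r$ this immediately yields the block contraction $\|y^{k+n_{0}}-r\|\le\hat{\gamma}\|y^{k}-r\|$, and Steps~2--3 unwind the recursion exactly as you do.

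By contrast, you bypass the external strict-contraction result and the finiteness argument: your telescoping--expansion trick (write $z^{k+l(j)-1}$ in terms of $z^{k}$, pair with $A^{j}$, use $|\la A^{j'},A^{j}\ra|\le 1$ and the Cauchy--Schwarz-type square bound, then combine with Lemma~\ref{lem1}) is self-contained and delivers the explicit rate $\gamma^{2}=1-\alpha(2-\alpha)\delta^{2}/(n\,n_{0}\max\{1,\alpha^{2}\})$. This is closer in spirit to the proof of Lemma~\ref{lem:gammaMREK} and to the classical quantitative almost-cyclic analysis (cf.\ \cite{natt}); the paper's approach is shorter but non-constructive and relies on \cite{cp98} as a black box. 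Both lead to the same conclusions (i)--(iii), and your derivation of (ii) and (iii) matches the paper's.
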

\begin{proof} {\bf Step 1.} 
Let $k \geq 0$ be an arbitrary fixed fixed iteration of the algorithm ACEK, $J=\{ 1, \dots, n \}$, $J_k=\{ j_{k+1}, \dots, j_{k+\Gamma}\}$ and (see (\ref{ACEK-1}))
\be
\label{eq70}
y^{k+\Gamma}=\varphi^{\alpha}_{j_{k+\Gamma}} \circ \dots \circ \varphi^{\alpha}_{j_{k+1}} y^k.
\ee
 We will first show that it exists $\hat{\gamma} \in [0, 1)$ such that 
\be
\label{1-1}
\p y^{k+\Gamma} -  P_{\mc{N}(A^T)}(y^k) \p \leq \hat{\gamma} \p y^k - P_{\mc{N}(A^T)}(y^k) \p.
\ee
From (\ref{eq70}) we get
\be
\label{1-3}
y^{k+\Gamma} = \Phi^{\alpha}_k y^k, ~~{\rm where}~~ \Phi^{\alpha}_k = \varphi^{\alpha}_{j_{k+\Gamma}} \circ \dots \circ \varphi^{\alpha}_{j_{k+1}} :\R^m \longrightarrow \R^m.
\ee
Let $A_{(k)}$ be the $n \times \Gamma$ matrix defined by
\be
\label{1-3p}
A_{(k)} = col( A^{j_{k+1}}, \dots, A^{j_{k+\Gamma}}).
\ee
Because the additional $\Gamma - n$ columns of $A_{(k)}$ are among the columns of the initial matrix $A$ (see (\ref{eq:jk_AC})), we have
\be
\label{1-3pp}
\mc{N}(A^T_{(k)}) = \mc{N}(A^T), ~~{\rm thus}~~ P_{\mc{N}(A^T_{(k)})} = P_{\mc{N}(A^T)}.
\ee
If we define $\tilde{\Phi}^{\alpha}_k = \Phi^{\alpha}_k P_{\mc{R}(A)}$ we know that (see e.g. \cite{cp98})
\be
\label{1-4}
\Phi^{\alpha}_k = \tilde{\Phi}^{\alpha}_k + P_{\mc{N}(A^T)}, ~\tilde{\Phi}^{\alpha}_k  P_{\mc{N}(A^T)} = P_{\mc{N}(A^T)} \tilde{\Phi}^{\alpha}_k = 0, ~~\p \tilde{\Phi}^{\alpha}_k \p < 1. 
\ee
Then
$
y^{k+\Gamma} = \Phi^{\alpha}_k(y^k) = $ $\tilde{\Phi}^{\alpha}_k(y^k) + P_{\mc{N}(A^T)}(y^k),
$ thus
$$
\p y^{k+\Gamma} - P_{\mc{N}(A^T)}(y^k) \p = \p \tilde{\Phi}^{\alpha}_k(y^k) \p =  \p \tilde{\Phi}^{\alpha}_k \left( y^k - P_{\mc{N}(A^T)}(y^k) \right)) \p \leq 
$$
$$ 
\p \tilde{\Phi}^{\alpha}_k \p \cdot \p y^k - P_{\mc{N}(A^T)}(y^k) \p.
$$
The set $J_k \setminus J$ has at most $\Gamma - n$ elements which are among the indices from $J$. It results that there are finitely many matrices  $A_{(k)}$, thus finitely many applications $\tilde{\Phi}^{\alpha}_k$, i.e. 
\be
\label{1-6}
\hat{\gamma} = \max_{k \geq 0} \p \tilde{\Phi}^{\alpha}_k \p ~~{\rm belongs ~to}~~ [0, 1),
\ee
which gives us (\ref{1-1}). \\
{\bf Step 2.} We will now show that  it  exists $\widehat{M} \geq 0$, independent on $k$ such that 
\be
\label{1-19}
\p \gamma_k \p ~~\leq~~ \widehat{M} \hat{\gamma}^{\frac{k - k({\rm mod}~ \Gamma)}{\Gamma}},
\ee
with $\hat{\gamma}$ from (\ref{1-6}).
 From (\ref{eq:b-dec}) and (\ref{ACEK-1}) it results that $y^k - r \in \mc{R}(A), \forall k \geq 0$, i.e. $P_{\mc{N}(A^T)}(y^k) = r, \forall k \geq 0.$ Thus, 
\be
\label{11-8}
\p y^{k+\Gamma} - r \p \leq \hat{\gamma} \p y^k - r \p, ~\forall k \geq 0,
\ee
and recursively
\be
\label{11-9}
\p y^{\mu \Gamma} - r \p \leq \hat{\gamma} \p y^{(\mu - 1) \Gamma} - r \p, ~\forall \mu \geq 1.
\ee
For the arbitrary fixed index $k \geq 0$, let $\mu$ be the integer given by
\be
\label{11-9p}
\mu = \frac{k - k({\rm mod}~ \Gamma)}{\Gamma}, ~~{\rm i.e.}
\ee
\be
\label{11-10}
k = \mu \Gamma + q, ~{\rm for ~some} ~q \in \{ 0, 1, \dots, \Gamma-1 \}.
\ee
If we define $\widetilde{M}$ as
\be
\label{11-11}
\widetilde{M} = \max \{ \p y^{\Gamma-1} - r \p, \dots, \p y^0 - r \p \},
\ee
 from (\ref{11-8}) - (\ref{11-11})  we get  for any $~\mu \geq 1$
$$
\p y^k - r \p ~=~ \p y^{\mu \Gamma + q} - r \p ~\leq~ \hat{\gamma} \p y^{(\mu - 1) \Gamma + q} - r \p ~\leq~ \dots ~\leq 
$$
\be
\label{11-12}
\hat{\gamma}^{\mu} \p y^q - r \p ~\leq~ \widetilde{M} \hat{\gamma}^{\mu}.
\ee
Hence
\be
\label{11-12p}
\p \gamma_{\ik} \p = \frac{|r_{\ik}-y^k_{\ik}|}{\p A_{\ik} \p} \leq \frac{\p y^k - r \p}{\min_{i=1, \dots,m} \{ \p A_i \p \}} \leq
\hat{\gamma}^{\mu} \frac{\widetilde{M}}{\min_{i=1, \dots,m} \{ \p A_i \p \}},
\ee
which is exactly (\ref{1-19}), with $\widehat{M} = \frac{\widetilde{M}}{\min_{i=1, \dots,m} \{ \p A_i \p \}}$.\\
{\bf step 3.} Then, relation (\ref{eq:gammaACEK}) holds directly from (\ref{1-19}) and gives us also the conclusion (ii). Conclusion (iii) holds from (\ref{1-1}) and the proof is complete.
\end{proof}

\begin{algorithm}
  \caption{Algorithm Almost Cyclic Extended Kaczmarz (ACEK)}
  \label{alg:ACEK}
  \begin{algorithmic}
    \Require{$A\in\R^{m\times n},\hat{b}\in\Rm, k_{max}\in \N$, $\alpha \neq 0, \omega \neq 0$}
    \Return{Approximative least-squares solution}
    \Statex {\bf Initialization} $x^0 \in \R^n, y^0 = \hat{b};$ 
    \For  {$k= 1, \dots , k_{max}$} 
       \State  Select the index $j_k \in [n]$ in an \emph{almost cyclic} way according to \eqref{eq:jk_AC}
    \State and set
       \be
       \label{ACEK-1}
       y^{k} = y^{k-1} - \alpha \frac{{\l y^{k-1}, A^{j_k} \r}}{\p A^{j_k} \p^2} A^{j_k}.
       \ee 
    \State Update the right hand side as 
       \be
       \label{ACEK-2}
       \hb^{k}=\hat{b} - y^{k}.
       \ee
    \State Select the index $i_k \in [m]$ in an \emph{almost cyclic} way according to \eqref{eq:ik_AC}
    \State and set
        \be
        \label{ACEK-3}
        x^{k}= x^{k-1} - \omega \frac{\l x^{k-1}, A_{i_k} \r - \hb^{k}_{i_k}}{\p A_{i_k} \p^2} A_{i_k}.
        \ee
    \EndFor  
  \end{algorithmic}
\end{algorithm}

\section{Convergence Analysis}\label{sec:convergence}

In order to prove the convergence of the two algorithms MREK \ref{alg:MREK} and ACEK \ref{alg:ACEK}, we next examine how the distance to any fixed least-squares solution changes.

To this end, we denote by $x^k_\ast=P_{H_\ik}(x^{k-1})$, where $H_\ik$ is the unperturbed hyperplane
from \eqref{eq:Hik}, given by
\be\label{eq:xkstar1}
x^k_\ast=x^{k-1}-\omega \frac{\l A_\ik ,x^{k-1} \r-b_\ik}{\|A_\ik\|^2}A_\ik,
\ee

\begin{proposition}
\label{prop:quasi-mon}
For any $x \in LSS(A; \hat{b})$ and for all $k \in \N$, we have for every iterate $x^k$ generated by
 the algorithm MREK \ref{alg:MREK} or ACEK \ref{alg:ACEK}, respectively 
and for any $\ik\in [m]$ 
\begin{itemize}
\item[(i)]
\be
\label{quasi-mon1-1}
\p x^k - x \p^2 = \p x^k_* - x \p^2 + \omega^2 \p \gamma_\ik \p^2,
\ee
\item[(ii)]
\be
\label{quasi-mon1-2}
\p x^k - x \p^2 = \p x^{k-1} - x \p^2 - \omega (2 - \omega) \frac{\left(\l A_\ik,x^{k-1}\r-b_\ik\right)^2}{\|A_\ik\|^2}
+ \omega^2 \p \gamma_\ik \p^2, 
\ee
\item[(iii)]
\be
\label{quasi-mon1-3}
\p x^k - x \p^2 \le \p x^{k-1} - x \p^2  
+ \omega^2 \p \gamma_\ik \p^2,
\ee
with $\gamma_\ik$ from \eqref{def:delta}.
\end{itemize}
\end{proposition}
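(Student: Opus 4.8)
The plan is to reduce all three claims to a single orthogonal decomposition of one step of the iteration. Since nothing in the argument depends on \emph{how} the row index $i_k$ is chosen, the same proof serves Algorithm \ref{alg:MREK} and Algorithm \ref{alg:ACEK} simultaneously; the control scheme only enters later, through Lemmata \ref{lem:gammaMREK} and \ref{lem:gammaACEK}.

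First I would rewrite the $x$-update in terms of the \emph{unperturbed} hyperplane $H_{i_k}$. By Lemma \ref{lemma:shift} (equivalently by \eqref{eq:hatHik}) the perturbed right-hand side satisfies $\hat b^{k}_{i_k} = b_{i_k} + \delta_{i_k}\|A_{i_k}\|^{2}$ with $\gamma_{i_k} = \delta_{i_k} A_{i_k}$, so inserting this into \eqref{MREK-5} (resp.\ \eqref{ACEK-3}) and comparing with \eqref{eq:xkstar1} gives the key identity
\[
x^{k} \;=\; \Big(x^{k-1} - \omega\,\frac{\langle A_{i_k}, x^{k-1}\rangle - b_{i_k}}{\|A_{i_k}\|^{2}}\,A_{i_k}\Big) \;+\; \omega\,\delta_{i_k} A_{i_k} \;=\; x^{k}_{*} + \omega\,\gamma_{i_k}.
\]
That is, the true iterate is the (relaxed) Kaczmarz step toward $H_{i_k}$, displaced by exactly $\omega\gamma_{i_k}$, the error caused by the still-imperfect right-hand side $\hat b^{k}$.

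For (i): since $x\in LSS(A;\hat b)$ we have $Ax=b$, hence $\langle A_{i_k},x\rangle=b_{i_k}$, i.e.\ $x\in H_{i_k}$; and $x^{k}_{*}=P_{H_{i_k}}(x^{k-1})\in H_{i_k}$ as well, so $x^{k}_{*}-x$ is orthogonal to $A_{i_k}$ and hence to $\gamma_{i_k}=\delta_{i_k}A_{i_k}$. Applying the Pythagorean identity to $x^{k}-x=(x^{k}_{*}-x)+\omega\gamma_{i_k}$ gives \eqref{quasi-mon1-1}. For (ii): putting $z=x^{k-1}-x$ and using $\langle A_{i_k},x\rangle=b_{i_k}$, the vector $x^{k}_{*}-x$ is precisely the relaxed projection of $z$ across the linear hyperplane $A_{i_k}^{\perp}$, so by the computation underlying \eqref{eq69} of Lemma \ref{lem21},
\[
\|x^{k}_{*}-x\|^{2} \;=\; \|x^{k-1}-x\|^{2} \;-\; \omega(2-\omega)\,\frac{\big(\langle A_{i_k},x^{k-1}\rangle-b_{i_k}\big)^{2}}{\|A_{i_k}\|^{2}};
\]
substituting into \eqref{quasi-mon1-1} yields \eqref{quasi-mon1-2}. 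For (iii): since $\omega\in(0,2)$ we have $\omega(2-\omega)\ge 0$, so the middle term in \eqref{quasi-mon1-2} is nonpositive and may be dropped, which is exactly \eqref{quasi-mon1-3}.

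There is no real obstacle here; the whole proof is the decomposition $x^{k}=x^{k}_{*}+\omega\gamma_{i_k}$, followed by Pythagoras and the elementary relaxed-projection identity. The only point that requires care is that decomposition itself: cleanly separating the spurious shift $\gamma_{i_k}$ from the genuine contraction $x^{k}_{*}$ toward $H_{i_k}$, and confirming the orthogonality $x^{k}_{*}-x\perp\gamma_{i_k}$ — which is where the precise meaning of $x^{k}_{*}$ as the projection onto the \emph{unperturbed} hyperplane is used, so that no cross term survives in (i). It is worth remarking that the proposition has isolated exactly the perturbation quantity $\omega^{2}\|\gamma_{i_k}\|^{2}$, whose absolute summability was the content of Lemmata \ref{lem:gammaMREK} and \ref{lem:gammaACEK}; hence \eqref{quasi-mon1-3} is already the quasi-Fej\'er-type inequality that the convergence argument of Section \ref{sec:convergence} will build on, while \eqref{quasi-mon1-2} supplies the extra ``progress'' term needed to control the residuals $\langle A_{i_k},x^{k-1}\rangle-b_{i_k}$.
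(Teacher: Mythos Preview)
Your proof is essentially identical to the paper's: both establish the decomposition $x^{k}=x^{k}_{*}+\omega\gamma_{i_k}$, invoke the orthogonality $x^{k}_{*}-x\perp\gamma_{i_k}$ (via $x,x^{k}_{*}\in H_{i_k}$) and Pythagoras for (i), apply the relaxed-projection identity of Lemma~\ref{lem21} (equation~\eqref{eq69}) to $x^{k}_{*}-x=P^{\omega}_{S_{i_k}}(x^{k-1}-x)$ for (ii), and drop the nonpositive middle term for (iii). The only difference is notational---the paper introduces the operators $P^{\omega}_{S_{i_k}}$ and $P_{S_{i_k}}$ explicitly before invoking \eqref{eq69}, whereas you cite the identity directly.
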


\begin{proof} (i) Choose $x \in LSS(A; \hat{b})$ arbitrarily. Then $Ax=b$ and, in particular,
$x\in H_\ik$.  Since 
$x^k_\ast\in H_\ik$, Lemma \ref{lemma:shift} (see also (\ref{ACEK-3})) asserts
$x^{k}=x^k_\ast+ \omega \gamma_\ik $.  The orthogonality relation $\gamma_\ik \perp (x_{\ast}^{k}-x) \in H_{i_{k}}$
due to $\gamma_\ik =\delta_\ik A_\ik$ \eqref{def:delta},
immediately gives  $ \p x^k - x \p^2 = \p x^k_* - x \p^2 + \omega^2 \p \gamma_\ik \p^2.$\\
(ii) We will denote by $P^{\omega}_{H_{\ik}}$ the right hand side of (\ref{eq:xkstar1}), i.e.
\be
\label{eq10}
x^k_\ast = P^{\omega}_{H_{\ik}}(x^{k-1}) = x^{k-1}-\omega \frac{\l A_\ik ,x^{k-1} \r-b_\ik}{\|A_\ik\|^2} A_\ik.
\ee
If $S_{\ik} = \{ x: \l A_{\ik}, x \r = 0 \}$ denotes the corresponding vector subspace (see (\ref{eq:Hik})), and because $b_{\ik} = \l A_{\ik}, x \r$ then the application $P^{\omega}_{S_{\ik}}(z) = z-\omega \frac{\l A_\ik, z \r}{\|A_\ik\|^2} A_\ik$, which satisfies 
\be
\label{eq10p}
x^k_\ast - x = P^{\omega}_{S_{\ik}}(x^{k-1} - x),
\ee
has similar properties with $\varphi^{\alpha}_j$ from (\ref{n1}). Let also $P_{S_{\ik}}(z) = z- \frac{\l A_\ik, z \r}{\|A_\ik\|^2} A_\ik$. 
Then, from Lemma \ref{lem21}, (\ref{eq69}) applied to $P^{\omega}_{S_{\ik}}$ and $P_{S_{\ik}}$ we get (by also using the fact that the projection  $P_{S_{\ik}}$ is an idempotent operator)
$$
\p x^k_\ast - x \p^2 = \p P^{\omega}_{S_{\ik}}(x^{k-1} - x) \p^2 = \omega (2 - \omega) \left( \p P_{S_{\ik}}(x^{k-1} - x) \p^2 - \p x^{k-1} - x \p^2 \right) + 
$$
$$
\p x^{k-1} - x \p^2 = \omega (2 - \omega) \l P_{S_{\ik}}(x^{k-1} - x), x^{k-1} - x \r + (1-\omega (2 - \omega)) \p x^{k-1} - x \p^2 =  
$$
\be
\label{eq10pp}
\p x^{k-1} - x \p^2  - \omega (2 - \omega) \frac{\l A_\ik, x^{k-1} - x \r^2}{\|A_\ik\|^2}.
\ee
Then,  equation (\ref{quasi-mon1-2})  follows from (\ref{quasi-mon1-1}) and (\ref{eq10pp}).\\
(iii) It results directly from (\ref{quasi-mon1-2}) and the proof is complete.
\end{proof}

\begin{remark} Proposition \ref{prop:quasi-mon} (iii), together with Lemmata \ref{lem:gammaMREK} (ii)
and \ref{lem:gammaACEK} (ii) shows that the sequence $(x^k)_{k \in\N}$ generated by the MREK \ref{alg:MREK} or the ACEK
algorithm \ref{alg:ACEK} is \emph{quasi-F\'{e}jer of Type II}, see \cite[Def. 1.1]{Combettes2001}.
\end{remark}

The next Lemma is a special case of Lemma 3.1 in \cite{Combettes2001}. The corresponding simplified
proof is included for completeness. 
\begin{lemma}
\label{lem:Lemma3.1}
Let $(\alpha_k)_{k\in \N}\in \ell_+$ and $(\beta_k)_{k\in \N}\in \ell_+$ be two
nonnegative sequences, and $(\veps_k)_{k\in \N}\in \ell_+ \cap \ell^1$ satisfying
\be\label{eq:Lemma3.1}
\alpha_{k+1}=\alpha_{k}- \beta_k + \veps_k .
\ee
Then the following statements hold true.
\begin{itemize}
\item[(i)] $(\beta_k)_{k\in \N}\in \ell^1$. In particular $(\beta_k)_{k\in \N}\in \ell_{c_0}$,
\item[(ii)] $(\alpha_k)_{k\in \N}$ converges.
\end{itemize}
\end{lemma}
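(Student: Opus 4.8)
The plan is to rewrite the recursion \eqref{eq:Lemma3.1} in telescoping form and to exploit the nonnegativity of all three sequences. First I would isolate $\beta_k = \alpha_k - \alpha_{k+1} + \veps_k$ and sum this identity over the first $N$ indices. The $\alpha$-terms telescope, leaving
\[
\sum_{k} \beta_k ~=~ \alpha_{1} - \alpha_{N+1} + \sum_{k} \veps_k ~\leq~ \alpha_{1} + \sum_{k\in\N}\veps_k ~<~ \infty,
\]
where the inequality uses $\alpha_{N+1}\geq 0$ and $(\veps_k)_{k\in\N}\in\ell^1$, and the bound is uniform in $N$. Since $(\beta_k)_{k\in\N}\in\ell_+$, its partial sums are nondecreasing and bounded above, hence convergent; therefore $(\beta_k)_{k\in\N}\in\ell^1$, and in particular the general term tends to zero, so $(\beta_k)_{k\in\N}\in\ell_{c_0}$. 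This proves (i).

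For (ii) I would introduce the auxiliary sequence $u_k := \alpha_k - \sum_{j<k}\veps_j$ (empty sum equal to $0$). Substituting \eqref{eq:Lemma3.1} gives the cancellation $u_{k+1} = \alpha_{k+1} - \sum_{j\leq k}\veps_j = \alpha_k - \beta_k - \sum_{j<k}\veps_j = u_k - \beta_k$, so $(u_k)_{k\in\N}$ is nonincreasing because $\beta_k\geq 0$. Moreover $u_k \geq -\sum_{j<k}\veps_j \geq -\sum_{j\in\N}\veps_j > -\infty$, so $(u_k)_{k\in\N}$ is bounded below and hence converges by monotone convergence for real sequences. Since $\sum_{j<k}\veps_j$ also converges (again by $(\veps_k)_{k\in\N}\in\ell^1$), the sequence $\alpha_k = u_k + \sum_{j<k}\veps_j$ converges as well, which is (ii).

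I do not expect a genuine obstacle here; the whole argument is elementary. The only points requiring a little care are the index bookkeeping in the telescoping sums and checking that the auxiliary sequence $u_k$ is both monotone and bounded below, so that the standard convergence criterion for monotone real sequences applies; once that is in place, (i) and (ii) follow immediately.
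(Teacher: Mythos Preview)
Your proof is correct. Part (i) is essentially identical to the paper's argument: both isolate $\beta_k=\alpha_k-\alpha_{k+1}+\veps_k$, telescope, and use $\alpha_{N+1}\ge 0$ together with $(\veps_k)\in\ell^1$ to bound the partial sums of $(\beta_k)$.

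For part (ii) the two proofs diverge slightly. The paper, having already established $(\beta_k)\in\ell^1$, observes that $|\alpha_{k+1}-\alpha_k|=|\veps_k-\beta_k|\le\veps_k+\beta_k$ with the right-hand side in $\ell^1$, and concludes directly that $(\alpha_k)$ is Cauchy. You instead subtract the partial $\veps$-sum to form $u_k=\alpha_k-\sum_{j<k}\veps_j$, note that $u_{k+1}=u_k-\beta_k$ is nonincreasing and bounded below, and then add back the convergent tail. Both arguments are elementary and of comparable length; yours has the mild advantage that convergence of $(\alpha_k)$ does not rely on first proving $(\beta_k)\in\ell^1$, while the paper's route avoids introducing an auxiliary sequence. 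Either is perfectly acceptable here.
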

\begin{proof} (i) From \eqref{eq:Lemma3.1}, we have $\beta_k=\alpha_{k}- \alpha_{k+1} + \veps_k$.
Furthermore,
\[
\sum_{k=0}^n\beta_k=\sum_{k=0}^n(\alpha_{k}- \alpha_{k+1}) + \sum_{k=0}^n\veps_k
=\alpha_{0}- \alpha_{n+1} + \sum_{k=0}^n\veps_k < \alpha_{0} + \sum_{k=0}^n\veps_k,
\]
which yields $\sum_{k\in\N}\beta_k < \alpha_{0}+ \sum_{k\in\N} \veps_k<+\infty$. Hence
$(\beta_k)_{k\in \N}\in \ell^1$.
Now $\ell^1\subset \ell_{c_0}$, shows (i).\\
(ii) Now, both $(\veps_k)_{k\in \N}\in \ell_{c_0}$ and $(\beta_k)_{k\in \N}\in \ell_{c_0}$.
By \eqref{eq:Lemma3.1},
\[
|\alpha_{k+1}- \alpha_{k}|=|\veps_k -\beta_k | \le |\veps_k|+ |\beta_k |=\veps_k+\beta_k,
\]
with $(\veps_k+\beta_k)_{k\in \N}\in \ell^1$.
This shows that $(\alpha_k)_{k\in \N}$ is a Cauchy sequence\footnote[1]{An arbitrary 
sequence $(y^k)_{k\in\N}$ is Cauchy, if
$\|y^{k+1}-y^k\|\le a_k$ holds for all $k\in\N$ and $(a_k)_{k\in\N}\in\ell^1\cap \ell_+$ arbitrary.
Indeed, $\|y^{m+k}-y^m\|=\|\sum_{j=m}^{m+k-1}(y^{j+1}-y^{j})\|\le \sum_{j=m}^{m+k-1} \|y^{j+1}-y^{j}\|
\le \sum_{j=m}^{m+k-1} a_j=s_{m+k-1}-s_{m-1}$, with $s_n:=\sum_{j=1}^{n} a_j$. Now $(s_n)_{k\in\N}$ is Cauchy since it converges due to $(a_k)_{k\in\N}\in\ell^1$.}. 
Since $(\alpha_k)_{k\in \N}\in\ell_+\subset \R$ it also converges. 
\end{proof}

We are now ready to prove convergence of MREK, Algorithm \ref{alg:MREK}.
\begin{theorem}
\label{th:MREK}
Let $\alpha, \omega \in (0, 2)$. 
The sequence $(x^k)_{k \in\N}$ generated by the MREK,
Algorithm \ref{alg:MREK}, converges to a least-squares solution  in $LSS(A; \hat{b})$,
for any starting vector $x^0 \in \R^n$. 
\end{theorem}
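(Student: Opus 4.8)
The plan is to combine the quasi-Fej\'er type estimate of Proposition~\ref{prop:quasi-mon}(ii) with the $\ell^1$ summability of $(\|\gamma_{i_k}\|^2)_{k\in\N}$ provided by Lemma~\ref{lem:gammaMREK}(ii), then extract a convergent subsequence whose limit is forced to lie in $LSS(A;\hat b)$ by the maximal-residual selection rule \eqref{MREK-4}. \textbf{Step 1 (a bounded orbit with a Fej\'er-type control).} I would fix an arbitrary $x\in LSS(A;\hat b)$, so $Ax=b$, and set $\alpha_k:=\|x^k-x\|^2$. Proposition~\ref{prop:quasi-mon}(ii) gives the recursion $\alpha_k=\alpha_{k-1}-\beta_k+\varepsilon_k$ for $k\ge1$, with
\[
\beta_k:=\omega(2-\omega)\frac{\big(\langle A_{i_k},x^{k-1}\rangle-b_{i_k}\big)^2}{\|A_{i_k}\|^2}\ge 0,
\qquad
\varepsilon_k:=\omega^2\|\gamma_{i_k}\|^2 .
\]
Since $\omega\in(0,2)$ we have $\beta_k\ge0$, and $(\varepsilon_k)_{k\in\N}\in\ell_+\cap\ell^1$ by Lemma~\ref{lem:gammaMREK}(ii); after an index shift, Lemma~\ref{lem:Lemma3.1} then yields that $(\alpha_k)_{k\in\N}$ converges and $\beta_k\to0$. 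Using $\omega(2-\omega)>0$ and $\|A_{i_k}\|\le\max_{i\in[m]}\|A_i\|<\infty$, this forces $|\langle A_{i_k},x^{k-1}\rangle-b_{i_k}|\to0$. Convergence of $(\alpha_k)_{k\in\N}$ also shows $(x^k)_{k\in\N}$ is bounded, so I can select a subsequence $x^{k_l}\to x^\ast$.

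\textbf{Step 2 (identifying the limit).} I would show $x^\ast\in LSS(A;\hat b)$. Writing $\hat b^k=\hat b-y^k=b+r-y^k$,
\[
\langle A_{i_k},x^{k-1}\rangle-\hat b^k_{i_k}=\big(\langle A_{i_k},x^{k-1}\rangle-b_{i_k}\big)-(r-y^k)_{i_k},
\]
both terms tending to $0$: the first by Step 1, the second because $\|r-y^k\|_\infty\to0$ by Lemma~\ref{lem:gammaMREK}(iii) (the bound there is geometric and uniform in the row index, cf.\ the proof of part~(i)). Hence $|\langle A_{i_k},x^{k-1}\rangle-\hat b^k_{i_k}|\to0$. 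By the maximal-residual rule \eqref{MREK-4}, this quantity equals $\max_{i\in[m]}|\langle A_i,x^{k-1}\rangle-\hat b^k_i|$, so $\langle A_i,x^{k-1}\rangle-\hat b^k_i\to0$ for every $i\in[m]$; combined with $\hat b^k\to\hat b-r=b$ this gives $\langle A_i,x^{k-1}\rangle\to b_i$ for all $i$, i.e.\ $Ax^k\to b$. Passing to the subsequence $x^{k_l}\to x^\ast$ yields $Ax^\ast=b$, i.e.\ $x^\ast\in LSS(A;\hat b)$.

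\textbf{Step 3 (convergence of the whole sequence).} Finally I would re-run Step 1 with the particular choice $x=x^\ast\in LSS(A;\hat b)$: the sequence $\|x^k-x^\ast\|^2$ converges, and since it contains the subsequence $\|x^{k_l}-x^\ast\|^2\to0$, its limit must be $0$, hence $x^k\to x^\ast\in LSS(A;\hat b)$, which is the assertion. I expect the main obstacle to be Step 2: the quantity $\beta_k\to0$ controls the residual only at the selected row $i_k$ and only against the \emph{clean} right-hand side $b$, whereas the algorithm iterates against the running right-hand side $\hat b^k$; one must first absorb the discrepancy $\hat b^k-b=r-y^k$ via $y^k\to r$, and only afterwards invoke the maximal-residual property to turn ``residual small at $i_k$'' into ``all residuals small''. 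Everything else is a routine application of Lemmata~\ref{lem:gammaMREK} and \ref{lem:Lemma3.1} and Proposition~\ref{prop:quasi-mon}.
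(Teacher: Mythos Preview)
Your proof is correct and follows the same overall strategy as the paper: set up the quasi-Fej\'er recursion via Proposition~\ref{prop:quasi-mon}(ii), feed it into Lemma~\ref{lem:Lemma3.1} using the $\ell^1$ bound from Lemma~\ref{lem:gammaMREK}(ii), and then identify the limit by combining the maximal-residual rule~\eqref{MREK-4} with $y^k\to r$.

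The one genuine difference is how you pass from ``$(\|x^k-x\|^2)_k$ converges for every $x\in LSS(A;\hat b)$'' to ``$(x^k)_k$ converges''. The paper observes directly from \eqref{MREK-5} that $\|x^k-x^{k-1}\|^2\le \frac{2\omega}{2-\omega}\beta_k+2\varepsilon_k\in\ell^1$, so $(x^k)_k$ is Cauchy and hence convergent; only afterwards does it show the limit lies in $LSS(A;\hat b)$. You instead use the classical quasi-Fej\'er three-step argument: boundedness $\Rightarrow$ a subsequential limit $x^\ast$, then $x^\ast\in LSS(A;\hat b)$, then re-run the Fej\'er estimate with $x=x^\ast$ to upgrade subsequential to full convergence. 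Both devices are standard; the paper's Cauchy argument is marginally shorter and avoids the subsequence extraction, while your route is the textbook quasi-Fej\'er pattern and makes explicit why having the limit in the target set is what closes the argument. Your Step~2, incidentally, actually proves the stronger statement $Ax^k\to b$ for the \emph{full} sequence (not just the subsequence), which is exactly what the paper establishes as well.
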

\begin{proof} We split the proof into two parts, showing convergence of $(x^k)_{k\in\N}$, and convergence to a point in $LSS(A;\hat b)$, respectively.
\begin{enumerate}[(i)]
\item
Choose any $x \in LSS(A; \hat{b})$ and set
\[
\alpha_{k+1}=\p x^k - x \p^2, \quad 
\beta_k =  \omega (2 - \omega) \frac{\left(\l A_\ik,x^{k-1}\r-b_\ik\right)^2}{\|A_\ik\|^2},\quad
\veps_k= \omega^2 \p \gamma_\ik \p^2, 
\]
The above Lemma (see also (4.7) !!!) asserts convergence of $(\alpha_k)_{k\in\N}$ and $(\beta_k)_{k\in\N}\in \ell^1$, in view of
$\veps_k\in\ell^1$, due to Lemma \ref{lem:gammaMREK} (ii) and Prop. \ref{prop:quasi-mon} (ii) respectively. In view of \eqref{MREK-5}, we get
\be\label{eq:xkCauchy}
\p x^k - x^{k-1} \p^2 = \omega^2 \left\|  -  \frac{\l A_\ik,x^{k-1}\r-b_\ik}{\|A_\ik\|^2} A_\ik + \gamma_\ik \right\|^2 
\le \frac{2 \omega}{2 - \omega} \beta_k +2 \veps_k .
\ee
Now $(\frac{2 \omega}{2 - \omega} \beta_k +2 \veps_k )_{k\in\N}\in\ell^1$ implies that $(x^k)_{k\in\N}$ is a Cauchy sequence\footnote[2]{Argument as above.} and converges as well.
In particular, using again \eqref{MREK-5},
\be\label{eq:res_bhatk}
\p x^k - x^{k-1} \p^2 
=  \omega^2 \frac{\left(\l A_\ik,x^{k-1}\r-\hat b^k_\ik\right)^2}{\|A_\ik\|^2} \to 0.
\ee

\item
Assume that $x^k\to \ol{x}$.
We show that $\ol{x} \in LSS(A,\hat b)$.
Fix any $i\in[m]$. Due to the particular choice of $\ik$ in \eqref{eq:ik_MR},
we have
\begin{align*}
|\l A_i,x^{k-1}\r- b_i| - |r_i- y^k_i|  & \le |\l A_i,x^{k-1}\r- b_i - (r_i- y^k_i)|\\
 & = |\l A_i,x^{k-1}\r-\hat b^k_i|\\
 &\stackrel{\eqref{eq:ik_MR}}{\le} |\l A_\ik,x^{k-1}\r-\hat b^k_\ik|.
\end{align*}
Thus $|\l A_i,x^{k-1}\r- b_i|\to 0$, due to $|r_i- y^k_i|\to 0$ by Lemma \ref{lem:gammaMREK} (iii) and \eqref{eq:res_bhatk}, respectively.
Summarizing, we get $\lim_{k\to\infty}\|Ax^{k-1}-b\|=0=\|A\ol{x}-b\|$.
Thus, $\ol{x} \in LSS(A,\hat b)$.
\end{enumerate}
\end{proof}

The main result concerning convergence of ACEK, Algorithm \ref{alg:ACEK}, is stated next.
\begin{theorem}
\label{th:ACEK}
The sequence $(x^k)_{k \in\N}$ generated by ACEK, 
Algorithm \ref{alg:ACEK}, converges to a least-squares solution in $LSS(A; \hat{b})$,
for any starting vector $x^0 \in \R^n$. 
\end{theorem}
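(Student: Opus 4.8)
The plan is to mirror, step for step, the proof of Theorem~\ref{th:MREK}: split the argument into (i) showing that $(x^k)_{k\in\N}$ is Cauchy, hence converges to some $\ol x\in\R^n$, and (ii) identifying $\ol x$ as an element of $LSS(A;\hat b)$. Part~(i) should go through essentially verbatim, since Proposition~\ref{prop:quasi-mon} is already stated for the ACEK iterates and Lemma~\ref{lem:gammaACEK}(ii) plays for ACEK exactly the role that Lemma~\ref{lem:gammaMREK}(ii) played for MREK. Only part~(ii) needs a genuinely different argument, because the almost-cyclic scheme does not supply the pointwise residual comparison coming from the maximal-residual inequality \eqref{eq:ik_MR}.

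For part~(i) I would take $\alpha,\omega\in(0,2)$ as in Theorem~\ref{th:MREK}, fix any $x\in LSS(A;\hat b)$ (so $Ax=b$, hence $x\in H_\ik$ for all $k$), and set $\alpha_{k+1}=\p x^k-x\p^2$, $\beta_k=\omega(2-\omega)(\l A_\ik,x^{k-1}\r-b_\ik)^2/\|A_\ik\|^2$, $\veps_k=\omega^2\p\gamma_\ik\p^2$. Proposition~\ref{prop:quasi-mon}(ii) gives the recursion $\alpha_{k+1}=\alpha_k-\beta_k+\veps_k$, and $(\veps_k)\in\ell_+\cap\ell^1$ by Lemma~\ref{lem:gammaACEK}(ii); Lemma~\ref{lem:Lemma3.1} then yields convergence of $(\alpha_k)$ and $(\beta_k)\in\ell^1$. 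The estimate behind \eqref{eq:xkCauchy}, applied to the update \eqref{ACEK-3}, bounds $\p x^k-x^{k-1}\p^2$ by $\frac{2\omega}{2-\omega}\beta_k+2\veps_k\in\ell^1$, so $(x^k)$ is Cauchy and converges to some $\ol x$. Since $\p x^k-x^{k-1}\p=\omega\,|\l A_\ik,x^{k-1}\r-\hat b^k_\ik|/\|A_\ik\|\to0$ and there are finitely many rows of nonzero norm, the selected-row residual of the updated system vanishes:
\be\label{eq:ACEK-sel-res}
|\l A_\ik,x^{k-1}\r-\hat b^k_\ik|\ \longrightarrow\ 0\qquad(k\to\infty).
\ee

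For part~(ii) I would first record that $\hat b^k=\hat b-y^k\to\hat b-r=b$, using $y^k\to r$ from Lemma~\ref{lem:gammaACEK}(iii). Then, fixing an arbitrary $i\in[m]$, I would invoke the almost-cyclic property \eqref{eq:ik_AC}: every block $\{k+1,\dots,k+m_0\}$ contains a step that picks row $i$, so there is a strictly increasing sequence $(k_\nu)_{\nu\in\N}$ with $i_{k_\nu}=i$. Passing to the limit along this subsequence in \eqref{eq:ACEK-sel-res}, and combining $x^{k_\nu-1}\to\ol x$ with $\hat b^{k_\nu}_i\to b_i$, forces $\l A_i,\ol x\r=b_i$. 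As $i\in[m]$ was arbitrary, $A\ol x=b$, i.e. $\ol x\in LSS(A;\hat b)$, which finishes the proof.

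The main obstacle is precisely this last step. In the MREK proof one bounds, uniformly in $i$, the true residual $|\l A_i,x^{k-1}\r-b_i|$ by the vanishing selected residual plus $|r_i-y^k_i|$; almost-cyclic control offers no such one-step domination. The replacement argument has to be combinatorial — one uses \eqref{eq:ik_AC} only to guarantee that every row is revisited infinitely often — and it crucially relies on the two facts already established in the excerpt, that $(x^k)$ converges and that $\hat b^k\to b$ (the latter via $y^k\to r$, Lemma~\ref{lem:gammaACEK}(iii)); the only remaining care is the routine remark that the rows have uniformly bounded, nonzero norms, so that $\p x^k-x^{k-1}\p\to0$ indeed implies \eqref{eq:ACEK-sel-res}.
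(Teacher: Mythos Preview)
Your proposal is correct and follows essentially the same route as the paper. Part~(i) is verbatim the paper's argument. For part~(ii), the paper reaches $A\ol x=b$ slightly more directly by observing that $(\beta_k)\in\ell^1$ already gives $\l A_\ik,x^{k-1}\r-b_\ik\to 0$ for the \emph{clean} right-hand side (this is \eqref{eq:clean_res}), and then invokes the almost-cyclic inclusion $[m]\subset(i_k)_{k\in\N}$ to pass to the limit along subsequences; you obtain the same conclusion via the equivalent detour through the perturbed residual \eqref{eq:ACEK-sel-res} combined with $\hat b^k\to b$.
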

\begin{proof} Choose any $x \in LSS(A; \hat{b})$ and set
\[
\alpha_{k+1}=\p x^k - x \p^2, \quad 
\beta_k = \omega (2 - \omega) \frac{\left(\l A_\ik,x^{k-1}\r-b_\ik\right)^2}{\|A_\ik\|^2},\quad
\veps_k= \omega^2 \p \gamma_\ik \p^2. 
\]
The proof of convergence $x^k\to\ol{x}$ is identically to
the first part of the proof of Thm.~\ref{th:MREK}, with the only difference that we
have $(\veps_k)_{k\in\N}\in\ell^1$ due to Lemma \ref{lem:gammaACEK}, (ii). Moreover 
\be\label{eq:clean_res}
\l A_\ik,x^{k-1}\r- b_\ik\to 0
\ee 
holds. The selection of $\ik$ in \eqref{eq:ik_AC} ensures $[m] \subset (i_{k})_{k \in \N}$. 
This, together with \eqref{eq:clean_res}, implies $A \bar{x} = b$ and completes the proof.
\end{proof}


\section{Conclusions}
\label{sec:conclusion}

We consider an inconsistent system of linear equations and our goal is to find the least squares (LS) solution.
It is known that the Kaczmarz method does not converge to the LS solution in this case. In its randomized form the Kaczmarz method converges with a radius proportional the magnitude of the largest entry of the noise in the system. Convergence to the LS solution can be achieved if step lengths converging to zero are used. Unfortunately this significantly compromises convergence speed. A different approach is adopted by the extended Kaczmarz (EK) algorithm. In both randomized and deterministic forms, the methods alternates between projections on hyperplanes defined by the rows of the matrix and projections on the subspace orthogonal to the matrix range defined by the matrix columns. By this procedure the method iteratively builds a corrected right hand side which is then simultaneously exploited by Kaczmarz steps applied to a corrected system. The randomized extended Kaczmarz (REK)
converges in expectation to the least squares solution and convergence rates can be obtained,  as recently shown by Zouzias and Freris. For deterministic control strategies however, the convergence was still open when alternating between row and columns updates. We close this gap by showing convergence to the LS solution.

\vspace*{1cm}

\bibliographystyle{alpha}

\end{document}